\newtheorem{The}{Theorem}[section]
\newtheorem{Lem}[The]{Lemma}
\newtheorem{Prop}[The]{Proposition}
\newtheorem{Cor}[The]{Corollary}
\newtheorem{Def}[The]{Definition}
\newtheorem{Ex}[The]{Example}
\newcommand{\C}{\mathbb{C}}
\newcommand{\R}{\mathbb{R}}
\newcommand{\N}{\mathbb{N}}
\newcommand{\OO}{\mathcal{O}}
\newcommand{\HH}{\mathcal{H}}
\newcommand{\U}{\underline{u}}
\begin{document}
 \title[Parabolic complex Monge-Amp\`ere equations]{Weak solution of Parabolic complex Monge-Amp\`ere equation II}

\setcounter{tocdepth}{1}

  \author{Do Hoang Son } 
\address{Institute of Mathematics \\ Vietnam Academy of Science and Technology \\18
Hoang Quoc Viet \\Hanoi \\Vietnam}
\email{hoangson.do.vn@gmail.com }
 \date{\today\\ The author is supported by Vietnam Academy of Science and Technology, under the program ``Building a research team and research trends in complex analysis", decision VAST.CTG.01/16-17.}


\begin{abstract}  
We study the equation $\dot{u}=\log\det (u_{\alpha\bar{\beta}})-Au+f(z,t)$ in  $\Omega\times (0,T)$, where $A\geq 0, T>0$ and $\Omega$ is a bounded strictly pseudoconvex domain
   in $\C^n$, with the boundary condition $u=\varphi$ and the initial condition $u=u_0$. 
    In this paper, we consider the case where $\varphi$ is smooth and $u_0$ is an arbitrary
     plurisubharmonic function in a neighbourhood of $\bar{\Omega}$ satisfying 
     $u_0|_{\partial\Omega}=\varphi(.,0)$.
\end{abstract}

\maketitle

\tableofcontents
\newpage

\section*{Introduction}
 Let $\Omega$ be a bounded smooth strictly pseudoconvex domain of $\C^n$, i.e., there exists a smooth
 strictly plurisubharmonic function $\rho$ defined on a bounded neighbourhood of $\bar{\Omega}$ such that 
 $$
 \Omega=\{\rho<0\}.
 $$
  Let $A\geq 0,T>0$. We consider the equation
 \begin{equation}\label{KRF}
 \begin{cases}
 \begin{array}{ll}
 \dot{u}=\log\det (u_{\alpha\bar{\beta}})-Au+f(z,t)\;\;\;&\mbox{on}\;\Omega\times (0,T),\\
 u=\varphi&\mbox{on}\;\partial\Omega\times [0,T),\\
 u=u_0&\mbox{on}\;\bar{\Omega}\times\{ 0\},\\
 \end{array}
 \end{cases}
 \end{equation}
 where $\dot{u}=\frac{\partial u}{\partial t}$,
  $u_{\alpha\bar{\beta}}=\frac{\partial^2 u}{\partial z_{\alpha}\partial\bar{z}_{\beta}}$,
  $u_0$ is a plurisubharmonic function in a neighbourhood of $\bar{\Omega}$
  and $\varphi, f$ are smooth in $\bar{\Omega}\times [0,T]$.
  
  If $u_0$ is a smooth strictly 
 plurisubharmonic function in $\bar{\Omega}$ and  some compatibility conditions are satisfied on
 $\partial\Omega\times\{0\}$, then   \eqref{KRF} admits a unique solution 
 $u\in C^{2;1}(\bar{\Omega}\times [0,T))$, and then $u$ is smooth outside $\partial\Omega\times \{0\}$
  (\cite{HL10}, see also Section \ref{sec HL}). In more general cases, we define
 the weak solution 
  \begin{Def}
   The function $u\in USC(\bar{\Omega}\times [0,T))$ (upper semicontinuous function) is called a 
    \emph{weak solution} of 
    \eqref{KRF} if there exist $u_m\in C^{\infty}(\bar{\Omega}\times [0,T))$ satisfying 
    \begin{equation}\label{KRF_weak}
     \begin{cases}
     \begin{array}{ll}
     u_m(.,t)\in SPSH(\Omega),\\
     \dot{u}_m=\log\det (u_m)_{\alpha\bar{\beta}}-Au_m+f(z,t)\;\;\;&\mbox{on}\;\Omega\times (0,T),\\
     u_m\searrow\varphi&\mbox{on}\;\partial\Omega\times [0,T),\\
     u_m\searrow u_0&\mbox{on}\;\bar{\Omega}\times\{ 0\},\\
     u=\lim\limits_{m\rightarrow \infty} u_m,
     \end{array}
     \end{cases}
     \end{equation}
     where $SPSH(\Omega)=\{\mbox{strictly plurisubharmonic functions on } \Omega\}$. 
     \end{Def}
     The equation \eqref{KRF} always admits a unique weak solution, and the weak solution has been described
     in case $u_0$ has zero Lelong numbers or $u\geq N\sum\log |z-a_j|+O(1)$  (\cite{Do15a,Do15b},
     see also Section \ref{sec weak}). In this article, we will consider the general case where 
     $u_0$ has positive Lelong numbers. 
     
     The corresponding problem in compact K\"ahler manifolds was considered and solved by Di Nezza anh Lu 
     \cite{DL14}. The maximal solution (as weak solution in case of domains) is smooth
     outside $D_t\times \{t\}$, where $D_t$ is an analytic set. In case of domains on $\C^n$, by condition
     $u_0= \varphi (.,0)$ on $\partial\Omega$ (and $\varphi$ is smooth), $(u_0-\sup_{\Omega}u_0-1)$
      can be approximated as in 
     Theorem \ref{Dem appr} by functions contained in Cegrell's class $\mathcal{E} (\Omega)$ and  then for
      any $\epsilon>0$, the set $\{z\in\Omega: \nu(u_0,z)\geq \epsilon\}$ contains a finite number of 
     points \cite{Ceg04}. Hence we can describe more precisely the singular set of the weak solution: 
     $\Omega\times\{t\}$ contains only finitely many singular points of the weak solution, for any $0<t<T$.
      In the domain $\Omega\times [0,T)$, 
      the set of  singular points of the weak solution is $\cup \{a_j\}\times (0,\epsilon_j)$,
       where $\nu(u_0, a_j)>0$
      and $\epsilon_j$ is a positive number
     which is bounded by constants depending on $A$ and $\nu(u_0, a_j)$.
     
   For the convenience, we denote by $\epsilon_A$ the functions
       \begin{equation}\label{epsilonA.eq}
     \begin{cases}
         \epsilon_A(x)=\frac{x}{2n} \mbox{ if } A=0,\\
         \epsilon_A(x)=\frac{1}{A}(\log (Ax+2n)-\log (2n))  \mbox{ if } A>0.
      \end{cases}
     \end{equation}
      where $x>0$ and $0<t<T$.
     
     Our main result is  following
  \begin{The}\label{main}
  Let $A\geq 0, T>0$ and $\Omega$ be a bounded smooth strictly pseudoconvex domain of $\C^n$.
      Let $\varphi, f$ be smooth functions in $\bar{\Omega}\times [0,T]$ 
       and $u_0$ be a plurisubharmonic function in a neighbourhood of $\bar{\Omega}$
      such that $u_0(z)=\varphi(z,0)$ for any $z\in\partial\Omega$. 
      Then the weak solution u of \eqref{KRF} satisfies
   \begin{itemize}
   \item[(i)] If $m>\frac{1}{T}$ and
   $\{z\in\Omega: \nu_{u_0}(z)\geq\frac{2}{m}\}=\{a_{m,1},...,a_{m,N(m)}\}$ then there exist nonegetive numbers
   $\epsilon_{m,1},...,\epsilon_{m,N(m)}\in (0,T]$ such that 
   \begin{itemize}
   \item[(a)] $\epsilon_A(\nu (u_0,a_{m,j}))\leq \epsilon_{m,j}\leq \epsilon_A(n\nu (u_0,a_{m,j}))$ for 
   any $j=1,...,N(m)$.\\
   \item[(b)]$\nu (u(.,t),a_{m,j})>0$ for any $j=1,...,N(m)$ and $t<\epsilon_{m,j}$. Moreover, if $\epsilon_{m,j}<T$ then
   $\nu (u(.,t),a_{m,j})\searrow 0$ as $t\nearrow \epsilon_{m,j}$.\\
   \item[(c)]$u\in C^{\infty}(Q_m)$, where $Q_m= \bar{\Omega}\times (\frac{1}{m},T)\setminus
   \cup_{j\leq N(m)}\{a_{m,j}\}\times (\frac{1}{m},\epsilon_{m,j}] .$\\
  \item[(d)] $\dot{u}=\log\det (u_{\alpha\bar{\beta}})-Au+f(z,t)\;\;\mbox{on}\;
  Q_m . $\\
   
   \end{itemize}
   \item[(ii)] $u=\varphi$ on $\partial\Omega\times [0,T)$ and $\lim\limits_{t\to 0}u(z,t)=u_0(z)$ for any
    $z\in\Omega$. In particular,  $u(z,t)\stackrel{L^1}{\longrightarrow}u_0(z)$
   as $t\to 0$.\\
   \end{itemize}
  \end{The}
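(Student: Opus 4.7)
The plan is to realise $u$ as a decreasing limit of smooth solutions $u_m$ coming from regularised initial data, and to control the Lelong numbers of $u(\cdot,t)$ via explicit sub- and supersolution barriers constructed near each singular point of $u_0$. The existence of such a limit is guaranteed by the weak solution theory of \cite{Do15a,Do15b} (summarised in Section \ref{sec weak}): using the Demailly-type approximation recalled in the introduction, together with the finiteness of $\{\nu(u_0,\cdot)\geq\epsilon\}$ for every $\epsilon>0$, one fixes a decreasing sequence $u_{0,m}$ of smooth strictly plurisubharmonic approximants of $u_0$ agreeing with $\varphi(\cdot,0)$ on $\partial\Omega$; Hou--Li \cite{HL10} then produces smooth solutions $u_m$ of the regularised problem, and the comparison principle yields $u_m\searrow u$.

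For the upper bound $\epsilon_{m,j}\leq\epsilon_A(n\nu(u_0,a_{m,j}))$ in (i)(a) together with the vanishing statement in (i)(b), I would compare $u$ from below with a family of explicit smooth subsolutions built near each singular point $a=a_{m,j}$. The natural ansatz is radially symmetric of the form
\[
v(z,t)=\alpha(t)\,\log\bigl(|z-a|^2+\beta(t)\bigr)+\gamma(z,t),
\]
whose complex Monge--Amp\`ere determinant is given by $\alpha^n\beta\bigl(|z-a|^2+\beta\bigr)^{-n-1}$ modulo smooth corrections. Forcing $v$ to satisfy \eqref{KRF} yields coupled ODEs for $\alpha$ and $\beta$; the scalar equation for $\alpha$, driven by $-A\alpha$ and by the $\log\det$ term, produces exactly the extinction time $\epsilon_A(\alpha(0))$ when initialised with $\alpha(0)$ of order $n\nu(u_0,a)$. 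Applying the maximum principle to $u_m-v$ and passing to the limit $m\to\infty$ then extracts the desired upper bound on the Lelong number of $u(\cdot,t)$ at $a$.

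Symmetrically, the lower bound $\epsilon_A(\nu(u_0,a_{m,j}))\leq\epsilon_{m,j}$ and the strict positivity in (i)(b) are obtained by comparing $u$ from above with suitable supersolutions. Here the structure is simpler: since the pure pole $\nu\log|z-a|^2$ has rank-deficient complex Hessian and hence $\log\det=-\infty$, the supersolution inequality for $w(z,t)=\tilde\alpha(t)\log|z-a|^2+\psi(z,t)$ with $\psi$ smooth strictly plurisubharmonic reduces, away from $a$, to an ODE for $\tilde\alpha$ whose solution is positive on $[0,\epsilon_A(\tilde\alpha(0)))$. The comparison $u\leq w$ then forces $\nu(u(\cdot,t),a)\geq\tilde\alpha(t)$, giving the bound and, by letting $\tilde\alpha(0)\to\nu(u_0,a)$, the sharp extinction estimate from below; the monotone decay $\nu(u(\cdot,t),a_{m,j})\searrow 0$ at $t=\epsilon_{m,j}$ comes from the continuity of the ODE solution together with upper semicontinuity of Lelong numbers in $t$.

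For parts (i)(c), (i)(d) and (ii), on any compact $K\subset Q_m$ the barriers above combined with the smoothness of $u_m$ yield uniform $L^\infty$ bounds and uniform lower bounds on $\det(u_m)_{\alpha\bar\beta}$, rendering the flow uniformly parabolic on $K$; Krylov--Evans regularity and bootstrap then give $u\in C^\infty(Q_m)$ solving \eqref{KRF} pointwise. Boundary attainment $u=\varphi$ on $\partial\Omega\times[0,T)$ follows from standard barriers built from the defining function $\rho$, and $\lim_{t\to 0}u(z,t)=u_0(z)$ at interior $z$ comes from $\limsup u\leq u_0$ (via $u\leq u_m$ and continuity of each $u_m$ up to $t=0$) together with $\liminf u\geq u_0$ (via an envelope argument using that $u_0$ is upper semicontinuous and plurisubharmonic). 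The hardest step will be the quantitative matching of the extinction-time constants in terms of $\epsilon_A$: the gap between $\epsilon_A(\nu)$ and $\epsilon_A(n\nu)$ reflects the mismatch between the Lelong number of a pure pole $\nu\log|z-a|^2$ used for the supersolution and the total radial mass captured by the smooth barrier used for the subsolution, and arranging the ODEs and smooth corrections so as to deliver exactly these bounds while passing to the limit $m\to\infty$ without losing the maximum principle against the singular initial data requires delicate tuning of the auxiliary functions $\gamma$ and $\psi$.
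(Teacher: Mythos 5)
Your proposal correctly identifies the general architecture (regularize $u_0$, apply Hou--Li, pass to a decreasing limit, control Lelong numbers in time, and invoke parabolic regularity off the singular set), and the supersolution-barrier mechanism you describe for the \emph{lower} bound $\epsilon_A(\nu(u_0,a_{m,j}))\leq\epsilon_{m,j}$ is essentially the content of the cited Proposition \ref{singular.prop.weak sec}, so that direction is sound. The gap is in the \emph{upper} bound $\epsilon_{m,j}\leq\epsilon_A(n\nu(u_0,a_{m,j}))$ and the companion smoothness claim (i)(c). You propose to squeeze $u$ from below by a smooth radial subsolution $v(z,t)=\alpha(t)\log(|z-a|^2+\beta(t))+\gamma(z,t)$. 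For the comparison principle to give $u_k\geq v$ you need $v(\cdot,0)\leq u_{0,k}$, hence in the limit $v(\cdot,0)\leq u_0$. But $u_0$ is an arbitrary psh function with $\nu(u_0,a)=\nu$: its polar set need not be the single point $a$. For instance $u_0$ could behave like $\nu\log|z_1|$ near $a$, or carry a dense sequence of small poles; in both cases $u_0\equiv-\infty$ on a set accumulating at $a$, while your $v(\cdot,0)$ is finite there (for any $\beta(0)\geq 0$ it is finite off $\{z=a\}$). No choice of the smooth correction $\gamma$ fixes this, and if you instead enforce $v(\cdot,0)\leq u_{0,k}$ for each $k$, the additive constant you must subtract diverges as $k\to\infty$, destroying the uniform lower bound. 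So the barrier cannot see the difference between an isolated log pole of mass $\nu$ and a spread-out singularity of the same Lelong number, which is exactly the difficulty this theorem is designed to overcome.

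The paper's proof replaces the subsolution barrier with a pluripotential-theoretic estimate. Lemma \ref{dotu.lem} converts the flow into an a priori bound of the form $(dd^c u_k(\cdot,t))^n\leq C\,e^{(u_k-u_0)/(t-\epsilon)}\,dV$, and Lemma \ref{lemestiu} (a capacity/volume iteration in the spirit of Ko\l{}odziej and Eyssidieux--Guedj--Zeriahi, hinging on the integrability $\int e^{2l(v_l-u_0)}<\infty$ from Proposition \ref{Prop fini sum app}(a)) then gives $u(\cdot,1/m)\geq v_l-C$, where $v_l=\frac{1}{2l}\log\sum|g_{l,j}|^2$ is the Demailly approximation of $u_0$ with $l\sim m/2$. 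Parts (b) and (c) of Proposition \ref{Prop fini sum app} say that $v_l$ is smooth wherever $\nu(u_0,\cdot)<1/l$ and, by the \L{}ojasiewicz-type Lemma \ref{lemgz}, satisfies $v_l\geq N\sum\log|z-a_{m,j}|-C$. Feeding this lower bound into Proposition \ref{first results}(iii) delivers simultaneously the smoothness of $u$ in $Q_m$, the classical equation there, and the upper bound $\epsilon_{m,j}\leq\epsilon_A(n\nu(u_0,a_{m,j}))$; the latter follows because for $2c>\nu(u_0,a_{m,j})$ one can choose $l$ with $1/l>\nu(u_0,a_{m,j})$, making $v_l$ smooth at $a_{m,j}$ and hence $u(a_{m,j},c)>-\infty$. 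You should also note that the paper treats $A>0$ by the explicit change of variables $v(z,t)=(At+1)u\bigl(z,\tfrac{\log(At+1)}{A}\bigr)$, reducing everything to the $A=0$ computation rather than re-running barrier ODEs with the $-Au$ term.

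Finally, a small factual correction to Step~1 of your plan: the regularisation $u_{0,k}$ of the initial data is obtained by ordinary convolution with mollifiers after adding $\tfrac{|z|^2}{k}$; Demailly's approximation enters only later, as the comparison function $v_l$ against which $u(\cdot,1/m)$ is bounded from below, not as the source of the approximating initial data.
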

  
  Let us first recall some preliminaries.
\section{Coherent analytic sheaves on pseudoconvex domains}
In this section, we recall some properties of coherent analytic sheaves on pseudoconvex domains in 
$\C^n$. The readers can find  more details in \cite{Hor90} (chap VI and chap VII). Corollary \ref{Noether U}
will be used to prove Proposition \ref{Prop fini sum app}.

If $\Omega$ is a domain in $\C^n$, we let $\OO (\Omega)$ denote the set of holomorphic functions on $\Omega$.
If $z\in\C^n$, we let $\OO_z(\C^n)$ or $\OO_z$ for short denote the set of equivalence classes of functions $f$ which are holomorphic in some neighbourhood of $z$, under the equivalence relation $f\sim g$ if $f=g$ in some 
neighbourhood of $z$. If $f$ is holomorphic in a neighbourhood of $z$, we write $f_z$ for the residue class of $f$
in $\OO_z$, which is called the germ of $f$ at $z$.

It follows from  Theorem 6.3.3 and Theorem 6.3.5 in \cite{Hor90} that:
\begin{The}
Let $a\in\C^n$. Then $\OO_a$ is a Noetherian ring. If $\mathcal{I}$ is a ideal of $\OO_a$, and 
$\mathcal{I}\ni f_j\stackrel{j\to\infty}{\longrightarrow} f\in\OO_a$ in sense of simple convergence, then 
$f\in\mathcal{I}$. Here  $f_j\longrightarrow f$ means that the coefficient of $(z-a)^{\alpha}$ in $f_j$ converges
to the coefficient of $(z-a)^{\alpha}$ in $f$ for every $\alpha$.
\end{The}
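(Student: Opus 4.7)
The plan is to prove the two assertions in sequence: first Noetherianity of $\mathcal{O}_a$ by induction on the dimension $n$, using the Weierstrass preparation and division theorems, and then closedness of ideals under coefficient-wise convergence as a direct consequence of Noetherianity via Krull's intersection theorem. Translate $a$ to the origin and write $\mathcal{O}_n := \mathcal{O}_0(\mathbb{C}^n)$; this is a local ring with maximal ideal $\mathfrak{m} = \{f \in \mathcal{O}_n : f(0) = 0\}$.

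For Noetherianity, the base case $n = 0$ is immediate, since $\mathcal{O}_0 = \mathbb{C}$ is a field. For the inductive step, let $\mathcal{I} \subset \mathcal{O}_n$ be a nonzero ideal and pick any $0 \neq g \in \mathcal{I}$. After a generic $\mathbb{C}$-linear change of coordinates, $g$ is regular in $z_n$ of some finite order $d$, so by the Weierstrass preparation theorem $g = uW$ with $u \in \mathcal{O}_n^\times$ and $W \in \mathcal{O}_{n-1}[z_n]$ a Weierstrass polynomial of degree $d$; in particular $W \in \mathcal{I}$. The Weierstrass division theorem then exhibits $\mathcal{O}_n/(W)$ as a free $\mathcal{O}_{n-1}$-module of rank $d$ with basis $1, z_n, \dots, z_n^{d-1}$. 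Since $\mathcal{O}_{n-1}$ is Noetherian by induction, the image of $\mathcal{I}$ in this finitely generated module is a finitely generated $\mathcal{O}_{n-1}$-submodule; lifting a generating set and adjoining $W$ produces a finite set of generators for $\mathcal{I}$.

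For the closedness, the key observation is that $\mathcal{O}_n$ is a Noetherian local ring (just proved), so Krull's intersection theorem, applied to the quotient $\mathcal{O}_n/\mathcal{I}$, gives $\bigcap_{k \geq 0}(\mathcal{I} + \mathfrak{m}^{k+1}) = \mathcal{I}$. For each $k$, consider the $\mathbb{C}$-linear projection $\pi_k \colon \mathcal{O}_n \to \mathcal{O}_n / \mathfrak{m}^{k+1}$ onto the finite-dimensional $\mathbb{C}$-vector space of truncated Taylor polynomials of degree $\leq k$. The map $\pi_k$ merely reads off the Taylor coefficients of order at most $k$, so it is continuous in the topology of coefficient-wise convergence. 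The image $\pi_k(\mathcal{I}) = (\mathcal{I} + \mathfrak{m}^{k+1})/\mathfrak{m}^{k+1}$ is a $\mathbb{C}$-linear subspace of a finite-dimensional $\mathbb{C}$-vector space, hence closed in the Euclidean topology. If $f_j \in \mathcal{I}$ and $f_j \to f$ coefficient-wise with $f \in \mathcal{O}_n$, then $\pi_k(f_j) \to \pi_k(f)$ for every $k$, so $\pi_k(f) \in \pi_k(\mathcal{I})$, i.e.\ $f \in \mathcal{I} + \mathfrak{m}^{k+1}$. Intersecting over all $k$ and invoking Krull yields $f \in \mathcal{I}$.

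The main obstacle is the Noetherianity step, whose proof rests on the Weierstrass preparation and division theorems: one must first invoke (or cite) these to set up the inductive reduction from $\mathcal{O}_n$ to $\mathcal{O}_{n-1}$, and verify that the reduction $\mathcal{O}_n/(W) \cong \bigoplus_{k<d} \mathcal{O}_{n-1} z_n^k$ carries ideals of $\mathcal{O}_n$ containing $W$ to $\mathcal{O}_{n-1}$-submodules of this free module. Once Noetherianity is in place, the closedness assertion follows from the short two-ingredient argument above — Krull's intersection theorem combined with continuity of the truncation maps $\pi_k$ — and no further Weierstrass machinery is required.
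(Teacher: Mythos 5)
Your proposal is correct. Note, however, that the paper does not prove this statement at all: it is quoted verbatim as a consequence of Theorems 6.3.3 and 6.3.5 in H\"ormander's book \cite{Hor90}, so there is no in-paper argument to compare against. Measured against the standard source, your treatment of Noetherianity is essentially H\"ormander's: induction on $n$, Weierstrass preparation to place a Weierstrass polynomial $W$ of degree $d$ in the ideal, and Weierstrass division to identify $\OO_n/(W)$ with a free $\OO_{n-1}$-module of rank $d$, whose submodules are finitely generated by the inductive hypothesis. Your proof of the closedness assertion is genuinely different from H\"ormander's: he again runs an induction on $n$ through the division theorem, tracking that the remainders $r_j$ of the $f_j$ upon division by $W$ converge coefficient-wise and invoking a module version of the statement over $\OO_{n-1}$; you instead deduce it purely algebraically from Noetherianity, via Krull's intersection theorem in the form $\bigcap_k(\mathcal{I}+\mathfrak{m}^{k+1})=\mathcal{I}$ together with the observation that each truncation $\pi_k$ is continuous for coefficient-wise convergence and that $\pi_k(\mathcal{I})$ is a linear subspace of a finite-dimensional space, hence closed. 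The two key points of your argument are sound: coefficient-wise convergence does imply convergence of each finite truncation, and the limit $f$ is assumed to lie in $\OO_a$, so the conclusion $f\in\mathcal{I}+\mathfrak{m}^{k+1}$ for all $k$ combines with Krull to give $f\in\mathcal{I}$. Your route buys a shorter, cleaner second half that avoids any continuity analysis of the Weierstrass division, at the cost of importing Krull's theorem (equivalently, Artin--Rees); H\"ormander's route stays entirely within the Weierstrass machinery already set up for Noetherianity.
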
 

By the same argument, we also have:
\begin{The}\label{the germ}
Let $\Omega$ be a neighbourhood of $0\in\C^n$. Then for any $\{f_j\}_{j=1}^{\infty}\subset\OO_(\Omega)$, there exist
$M>0$ and $0<r<d(0,\partial\Omega)$ such that the ideal $\mathcal{I}$ of $\OO (\Delta_r^n)$ which is generated by 
$\{f_j\}_{j=1}^{\infty}$ is also generated by $\{f_j\}_{j=1}^M$. Moreover, $\mathcal{I}$ is closed in the topology
of uniform convergence on compact subsets of $\Delta_r^n$.
\end{The}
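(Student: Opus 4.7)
The plan is to mimic the argument of the preceding germ-level theorem, but now carried out in the ring $\OO(\Delta_r^n)$ for a suitably chosen polydisc. The two-step strategy is: first reduce the infinite sequence $\{f_j\}$ to finitely many generators at the level of germs at $0$ by invoking the Noetherianity of $\OO_0$, and then spread this finite generation from the stalk at $0$ to global sections on a Stein polydisc via coherent-sheaf techniques.

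For the first step, I apply the preceding theorem to the ideal $\mathcal{I}_0 \subset \OO_0$ generated by the germs $\{(f_j)_0\}_{j=1}^{\infty}$: Noetherianity yields finite generation, so after reordering I obtain an $M$ with $\mathcal{I}_0 = ((f_1)_0,\ldots,(f_M)_0)\OO_0$, and for each $k>M$ a germ identity $(f_k)_0 = \sum_{j=1}^M h_{j,k}^{(0)}(f_j)_0$ with $h_{j,k}^{(0)}\in\OO_0$. For the second step, let $\mathcal{J}\subset\OO_\Omega$ be the (coherent, by Oka) ideal sheaf generated by $f_1,\ldots,f_M$, with coherent quotient $\mathcal{Q}=\OO_\Omega/\mathcal{J}$. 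Since $\Delta_r^n$ is Stein, Cartan's Theorem~B identifies $\mathcal{J}(\Delta_r^n)$ with the ideal $\sum_{j=1}^M f_j\,\OO(\Delta_r^n)$. Hence it is enough to prove that $(f_k)_z\in\mathcal{J}_z$ for every $z\in\Delta_r^n$ and every $k>M$.

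The principal obstacle is the uniformity of $r$ over the infinitely many $k$. For each fixed $k$, coherence of $\mathcal{Q}$ already implies $(f_k)_z \in \mathcal{J}_z$ on some open neighbourhood $V_k \ni 0$, but a priori the $V_k$ could shrink with $k$. To handle this I would first note that $(f_k)_z\in\mathcal{J}_z$ is automatic at every $z$ outside $V(f_1,\ldots,f_M)$, so the bad locus is contained in the analytic set $\{f_1=\cdots=f_M=0\}$; and then exploit the Noetherianity of every nearby stalk $\OO_z$ together with a Koszul-type free resolution of $\mathcal{J}$ valid on a fixed polydisc, to conclude that after a single shrinking of $r$ the germ relation at $0$ propagates to the whole polydisc simultaneously for every $k$. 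Once this uniform $r$ is in hand, the closedness of $\mathcal{I}$ in the Fréchet topology of uniform convergence on compacts is a standard consequence of coherent-sheaf theory on Stein manifolds, and can also be deduced directly from the coefficient-wise closedness statement of the preceding theorem applied pointwise together with a Montel extraction.
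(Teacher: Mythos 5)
You correctly identify the crux of the matter: for each fixed $k$ the germ relation $(f_k)_0=\sum_{j\leq M}h^{(0)}_{j,k}(f_j)_0$ only guarantees $(f_k)_z\in\mathcal{J}_z$ on some neighbourhood $V_k$ of $0$ depending on $k$, and the whole content of the theorem is that one radius $r$ serves simultaneously for all $k$. But the mechanism you propose to close this gap does not actually produce the uniformity. Noetherianity of the nearby stalks $\OO_z$ and a free (Koszul-type) resolution of $\mathcal{J}$ on a fixed polydisc both give information ``one $k$ at a time'': each says that for a given $k$ there is some neighbourhood on which $(f_k)_z\in\mathcal{J}_z$, but neither bounds those neighbourhoods from below independently of $k$. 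Restricting to the analytic set $\{f_1=\cdots=f_M=0\}$ does not help either, since the stalk inclusion still has to be checked there and could, a priori, fail arbitrarily close to the origin as $k\to\infty$. As written, Step 3 is an assertion that an argument exists, not an argument.

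Two standard inputs would actually close the gap. One is the \emph{strong Noetherian property} of coherent sheaves (an increasing chain of coherent subsheaves of $\OO$ is stationary on every compact); this is precisely what makes the ideal sheaf $\mathcal{F}$ generated by the \emph{whole} sequence $\{f_j\}_{j\ge 1}$ locally finitely generated, hence coherent by Oka — the Corollary the paper records a few lines below this Theorem. Once $\mathcal{F}$ is coherent, the quotient $\mathcal{F}/\mathcal{J}$ is a coherent sheaf with vanishing stalk at $0$, so it vanishes on a whole polydisc $\Delta_r^n$; that gives $\mathcal{F}_z=\mathcal{J}_z$ for all $z\in\Delta_r^n$ and all $k$ at once, and your Cartan~B step finishes. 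The other route — and the one the paper actually indicates when it says ``by the same argument'', citing Theorems 6.3.3 and 6.3.5 of H\"ormander — is the quantitative form of local division: from the germs $(f_1)_0,\dots,(f_M)_0$ alone one gets a \emph{fixed} polydisc $\Delta$ and a constant $C$ so that every $f$ holomorphic near $\bar{\Delta}$ with $f_0\in\mathcal{I}_0$ can be written $f=\sum_{j\le M}c_jf_j$ on $\Delta$ with $\sup|c_j|\le C\sup|f|$; applying this to each $f_k$ gives the uniform $r$ immediately, and the same bound yields the closedness of $\mathcal{I}$. So the overall plan (Noetherian at the stalk, then globalize through Stein theory) is sound and in the spirit of the machinery the paper develops, but the propagation step needs one of these two genuine uniformity theorems, and the Koszul/Noetherian-stalks argument you sketch does not supply it.
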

\begin{Def}
Let $X$ and $\mathcal{F}$ be two topological spaces and $\pi$ be a mapping $\mathcal{F}\to X$ such that
\begin{itemize}
\item[(i)] $\pi$ maps $\mathcal{F}$ onto $X$.
\item[(ii)] $\pi$ is a local homeomorphism, that is, every point in $\mathcal{F}$ has an open neighbourhood which
is mapped homeomorphically by $\pi$ on an open set in $X$.
\end{itemize}
Then $\mathcal{F}$ is called a sheaf on $X$ and $\pi$ is called the projection on $X$. If $U$ is a subset of $X$, a
section of $\mathcal{F}$ over $U$ is a continuous map $\varphi: U\to\mathcal{F}$ such that $\pi\varphi=Id$ on $U$.
The set of all sections of $\mathcal{F}$ over $U$ is denoted by $\Gamma (U,\mathcal{F})$. If  $x\in X$,
then $\mathcal{F}_x=\pi^{-1}\{x\}$ is called the stalk of $\mathcal{F}$ at $x$. 
\end{Def}
\begin{Ex} Let $\Omega$ be a open subset of $\C^n$. 
The sheaf $\OO_{\Omega}$ (or $\OO$ for short) of germs of holomorphic functions (analytic functions)
on $\Omega\subset \C^n$ is the topological space defined by
\begin{itemize}
\item[(i)] $\OO=\cup_{z\in\Omega}\OO_z$.\\
\item[(ii)] The topology in $\OO$ is the strongest topology such that for every open subset $U\subset\Omega$ 
and for every $f\in\OO (U)$, the map 
$U\ni z\mapsto f_z\in\OO$
is continuous.
\end{itemize}
The projection on $\Omega$ is the map $\pi : \OO\to \Omega$ defined by $\pi (\OO_z)=\{z\}$. 
The set of sections of $\OO$ over $U$ is $\Gamma (U, \OO)=\OO (U)$ and the stalk of $\OO$ at $z$ is  the set 
of germs at $z$ of holomorphic functions in some neighbourhood of $z$.
\end{Ex}
\begin{Def}
Let $\Omega$ be an open subset of $\C^n$. A sheaf $\mathcal{F}$ on $\Omega$ is called an analytic sheaf if 
it is a sheaf of $\OO$-modules, i.e., $\mathcal{F}_z$ is an $\OO_z$ module for every $z\in\Omega$
and the product of a section of $\OO$ and a section of $\mathcal{F}$ is a section of $\mathcal{F}$.
\end{Def}
 \begin{Def}
 An analytic sheaf $\mathcal{F}$ on $\Omega$ is called coherent if
 \begin{itemize}
 \item[(i)]$\mathcal{F}$ is locally finitely generated, i.e., for every $z\in\Omega$ there exists a neighbourhood
 $U\subset\Omega$ and a finite number of sections $f_1,...,f_q\in\Gamma (U,\mathcal{F})$ so that $\mathcal{F}_z$
 is generated by $(f_1)_z,...,(f_q)_z$ as $\OO_z$ module for every $z\in\Omega$.
 \item[(ii)] If $U$ is an open subset of $\Omega$ and $f_1,..., f_q\in\Gamma (U,\mathcal{F})$, then the sheaf of
 relations $\mathcal{R} (f_1,...,f_q)$ is locally finitely generated.
 \end{itemize}
 \end{Def}
 \begin{The}
 Every locally finitely generated subsheaf of $\OO^p$ is coherent.
 \end{The}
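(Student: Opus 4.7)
The sheaf $\mathcal{F}\subset\OO^p$ satisfies condition (i) of coherence by hypothesis, so the task is to verify condition (ii): for every open $U\subset\Omega$ and every finite collection of sections $f_1,\ldots,f_q\in\Gamma(U,\mathcal{F})\subset\Gamma(U,\OO^p)$, the sheaf of relations $\mathcal{R}(f_1,\ldots,f_q)$ is locally finitely generated. A key preliminary observation is that whether a germ $(g_1,\ldots,g_q)$ satisfies $\sum g_j f_j=0$ depends only on the $f_j$ as sections of $\OO^p$, not on the ambient subsheaf $\mathcal{F}$. Hence it is enough to prove the universal statement: for any $f_1,\ldots,f_q\in\Gamma(U,\OO^p)$, the relation sheaf $\mathcal{R}(f_1,\ldots,f_q)$ is locally finitely generated. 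My plan is to establish this by induction on $p$.

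The base case $p=1$ is the classical Oka coherence theorem for the structure sheaf $\OO$. I would invoke the proof from Chapter VI of \cite{Hor90}, which leverages the Weierstrass preparation/division theorems together with the Noetherian property of $\OO_a$ recorded earlier in this section and with Theorem \ref{the germ}. This step is the hard analytic input of the argument, and everything nontrivial is concentrated here.

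For the inductive step, assume the result for all finite direct sums $\OO^{p-1}$. Fix $a\in U$ and write $f_j=(f_j^{(1)},\ldots,f_j^{(p)})$. Apply the base case to the last components $f_1^{(p)},\ldots,f_q^{(p)}$ to obtain a neighbourhood $V$ of $a$ on which $\mathcal{R}(f_1^{(p)},\ldots,f_q^{(p)})$ is generated by finitely many sections $r_1,\ldots,r_N\in\Gamma(V,\OO^q)$. Any relation $(g_1,\ldots,g_q)\in\mathcal{R}(f_1,\ldots,f_q)$ in particular satisfies $\sum g_j f_j^{(p)}=0$, so near $a$ it admits a decomposition $(g_1,\ldots,g_q)=\sum_{i=1}^N h_i r_i$ for some germs $h_i\in\OO_a$. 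Substituting into the remaining equations $\sum g_j f_j^{(k)}=0$ for $k<p$ converts them into a single relation, carried by $(h_1,\ldots,h_N)$, among the auxiliary sections
\begin{equation*}
F_i=\Bigl(\sum_{j=1}^q r_i^{(j)} f_j^{(1)},\ldots,\sum_{j=1}^q r_i^{(j)} f_j^{(p-1)}\Bigr)\in\Gamma(V,\OO^{p-1}),\qquad i=1,\ldots,N.
\end{equation*}
By the inductive hypothesis, $\mathcal{R}(F_1,\ldots,F_N)$ is locally finitely generated by some $s_1,\ldots,s_M\in\OO^N$, and the images $\sum_i s_\ell^{(i)} r_i\in\OO^q$ then provide local generators of $\mathcal{R}(f_1,\ldots,f_q)$.

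The principal obstacle is the base case, which is genuinely deep and rests on Weierstrass preparation; the inductive step is a straightforward component-by-component bookkeeping. The one place to be careful is verifying that the local decomposition $(g_1,\ldots,g_q)=\sum h_i r_i$ depends coherently on the stalk — but this is immediate from the definition of the generators $r_i$ and the stalkwise nature of the statement, so no serious difficulty should arise.
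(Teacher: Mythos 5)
The paper does not actually prove this statement: it is one of several results recalled without proof in the section on coherent sheaves, where the reader is referred to Chapters VI and VII of H\"ormander's book \cite{Hor90} (this is Oka's coherence theorem). So there is no internal proof to compare against; the relevant benchmark is the proof in the cited reference.

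Your sketch reproduces the standard argument from that source correctly. Reducing to the universal claim that $\mathcal{R}(f_1,\dots,f_q)$ is locally finitely generated for arbitrary $f_j\in\Gamma(U,\OO^p)$ is legitimate, since the relation sheaf depends only on the $f_j$ as $\OO^p$-valued sections and not on $\mathcal{F}$. The induction on $p$ is sound: given local generators $r_1,\dots,r_N$ of $\mathcal{R}(f_1^{(p)},\dots,f_q^{(p)})$, the substitution $(g_1,\dots,g_q)=\sum_i h_i r_i$ identifies germs of $\mathcal{R}(f_1,\dots,f_q)$ with germs $(h_1,\dots,h_N)$ of $\mathcal{R}(F_1,\dots,F_N)$ under the map $\OO^N\to\OO^q$, $h\mapsto \sum_i h_i r_i$, and pushing forward generators of the latter gives generators of the former; you should just note explicitly that one may have to shrink the neighbourhood once more when invoking the inductive hypothesis on $F_1,\dots,F_N$. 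The entire analytic content is concentrated, as you say, in the case $p=1$, which is Oka's theorem for the structure sheaf and rests on the Weierstrass preparation and division theorems together with the Noetherian and closedness properties of $\OO_a$ recorded earlier in the section. Since that is exactly where the paper points the reader, your proposal is in line with the paper's intended route.
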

 \begin{Cor}
 Let $\{f_j\}_{j=1}^{\infty}\subset\OO_(\Omega)$. Assume that $\mathcal{F}$ is the analytic sheaf generated by
 $\{f_j\}_{j=1}^{\infty}$ over $\Omega$, i.e., the germs  $(f_1)_z,...,(f_q)_z...$ generate $\mathcal{F}_z$
 for every $z\in\Omega$. Then $\mathcal{F}$ is coherent.
 \end{Cor}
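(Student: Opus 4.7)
The plan is to reduce the corollary to the preceding theorem (every locally finitely generated subsheaf of $\OO^p$ is coherent) by verifying that $\mathcal{F}$ is a locally finitely generated subsheaf of $\OO=\OO^1$. The subsheaf structure is clear, since each $f_j\in\OO(\Omega)$ defines a global section of $\OO$; so what actually requires work is the local finite generation.

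Fix a point $z_0\in\Omega$. I would apply Theorem \ref{the germ}, after a translation sending $z_0$ to the origin, to the countable family $\{f_j\}_{j=1}^{\infty}$: this yields an integer $M\geq 1$ and a polydisc $\Delta_r^n$ centered at $z_0$ and contained in $\Omega$ such that the ideal of $\OO(\Delta_r^n)$ generated by the full countable family already coincides with the ideal generated by the finite subfamily $\{f_j\}_{j=1}^{M}$. Consequently, for every $j>M$ there exist $h_{j,1},\dots,h_{j,M}\in\OO(\Delta_r^n)$ with $f_j=\sum_{k=1}^{M}h_{j,k}f_k$ on $\Delta_r^n$. Taking germs at any $z\in\Delta_r^n$ then shows that each germ $(f_j)_z$ lies in the $\OO_z$-submodule generated by $(f_1)_z,\dots,(f_M)_z$; since by hypothesis the full family of germs generates $\mathcal{F}_z$, the finite subfamily does as well. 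Thus $f_1,\dots,f_M\in\Gamma(\Delta_r^n,\mathcal{F})$ generate $\mathcal{F}$ on a neighbourhood of $z_0$.

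Since $z_0\in\Omega$ was arbitrary, $\mathcal{F}$ is locally finitely generated, and the preceding theorem (applied with $p=1$) yields coherence. There is essentially no obstacle here beyond correctly invoking Theorem \ref{the germ}; it is that theorem, resting on the Noetherian property and the topological closedness of ideals in $\OO_a$, which carries the real content of the argument, namely the passage from countably many generators to finitely many on a sufficiently small polydisc.
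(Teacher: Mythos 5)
Your proof is correct and is clearly the intended argument: the paper states this Corollary without a written proof, but Theorem \ref{the germ} is placed immediately before the sheaf-theoretic material precisely so that it can be invoked, exactly as you do, to pass from the countable generating family to a finite one on a small polydisc, after which the preceding Oka coherence theorem (locally finitely generated subsheaves of $\OO^p$ are coherent) applies with $p=1$. Your observation that the $\OO(\Delta_r^n)$-linear relations $f_j=\sum_{k\le M}h_{j,k}f_k$ localize to germ-level relations at every $z\in\Delta_r^n$ is the one point that deserves to be spelled out, and you handle it correctly.
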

 \begin{The}
 Let $\Omega$ be a Stein manifold, $K$ an $\Omega$-holomorphically convex compact subset of $\Omega$, and 
 $\mathcal{F}$ a coherent analytic sheaf on a neighbourhood of $K$. Then
 \begin{itemize}
 \item[(i)] There exist finitely many sections $f_1, ..., f_q$ of $\mathcal{F}$ over a neighbourhood of $K$ 
 which generate $\mathcal{F}$ there.
 \item[(ii)] If $f_1,..,f_q$ are sections of $\mathcal{F}$ over a neighbourhood of $K$ which generate 
 $\mathcal{F}$ there and if $f$ is an arbitraly section of $\mathcal{F}$ over a neighbourhood of $K$, then
 one can find $c_1,...,c_q$ analytic in a neighbourhood of $K$ so that $f=\sum\limits_{j=1}^qc_jf_j$ there.
 \end{itemize}
 \end{The}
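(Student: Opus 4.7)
The plan is to reduce both parts to the two fundamental theorems of Cartan--Oka--Serre for coherent analytic sheaves on Stein manifolds: Theorem A (stalks are generated by global sections) and Theorem B (vanishing of $H^1$ with coefficients in a coherent sheaf). The bridge between the statement as worded and those theorems is the hypothesis that $K$ is $\Omega$-holomorphically convex: combined with $\Omega$ being Stein, it produces arbitrarily small Stein open neighbourhoods of $K$.

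First I would construct a relatively compact Stein (in fact, analytic-polyhedral) neighbourhood $U$ of $K$ with $\bar{U}$ contained in the domain where $\mathcal{F}$ is defined and coherent. This is the standard consequence of holomorphic convexity: for each $z$ outside $K$ there exists $h \in \OO(\Omega)$ with $|h(z)|>\sup_K |h|$; a finite number of such functions, together with a strictly plurisubharmonic exhaustion of $\Omega$, cut out an analytic polyhedron $U \Subset \Omega$ with $K \subset U$ and $\bar{U}$ contained in the chosen neighbourhood.

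For part (i), I would apply Theorem A on the Stein manifold $U$: for every $z \in U$ the stalk $\mathcal{F}_z$ is generated, as an $\OO_z$-module, by elements of $\Gamma(U,\mathcal{F})$. Since $\mathcal{F}$ is locally finitely generated, each point of $K$ admits a neighbourhood on which finitely many such global sections already generate $\mathcal{F}$, and compactness of $K$ yields a finite family $f_1,\dots,f_q \in \Gamma(U,\mathcal{F})$ generating $\mathcal{F}$ throughout an open neighbourhood of $K$. For part (ii), I would form the $\OO$-linear morphism $\Phi: \OO^q \to \mathcal{F}$ sending $(c_1,\dots,c_q)$ to $\sum_j c_j f_j$. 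By hypothesis $\Phi$ is surjective on stalks at every point of $K$, hence on all of $U$ after possibly shrinking. Its kernel $\mathcal{R}$ is a coherent subsheaf of $\OO^q$, and the short exact sequence $0 \to \mathcal{R} \to \OO^q \to \mathcal{F} \to 0$ induces a long exact cohomology sequence on $U$. Theorem B gives $H^1(U,\mathcal{R})=0$, so $\Gamma(U,\OO^q) \to \Gamma(U,\mathcal{F})$ is surjective, producing the required $c_1,\dots,c_q \in \OO(U)$ with $f=\sum_j c_j f_j$ in a neighbourhood of $K$.

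The main technical obstacle is not the sheaf-theoretic bookkeeping above but the construction of $U$ and the proofs of Theorems A and B themselves. Both ultimately rest on the Noetherian and closedness properties of the local ring $\OO_a$ and the finite-generation result already recalled in the excerpt, together with either H\"ormander's $L^2$ estimates for $\bar{\partial}$ on pseudoconvex domains or Oka's original argument via analytic polyhedra; for the purposes of this paper they are invoked as standard tools from \cite{Hor90}.
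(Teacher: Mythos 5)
The paper does not prove this statement at all: it is one of the background results recalled verbatim from \cite{Hor90} (it is essentially Theorem 7.2.8 there, i.e.\ the Cartan--Oka--Serre theory specialised to a holomorphically convex compact set), and the author uses it only as a black box to deduce Corollary \ref{Noether U}. Your sketch --- shrink to a Stein (analytic-polyhedral) neighbourhood $U$ of $K$ using $\Omega$-holomorphic convexity, get (i) from Theorem A plus coherence (finitely many global sections generating a stalk generate nearby stalks, then compactness of $K$), and get (ii) from the exact sequence $0\to\mathcal{R}\to\OO^q\to\mathcal{F}\to 0$ together with $H^1(U,\mathcal{R})=0$ from Theorem B --- is exactly the standard argument in the cited source, and it is correct as far as it goes; like the paper, it ultimately defers the substantive work (Theorems A and B themselves) to \cite{Hor90}.
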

 \begin{Cor}\label{Noether U}
 Let $W$ be a pseudoconvex domains in $\C^n$ and  $\Omega$ be a open subset of $W$, $\bar{\Omega}\subset W$.
  Let $\{f_j\}_{j=1}^{\infty}\subset\OO(W)$. Assume that $\mathcal{I}$ is the ideal of $\OO(\Omega)$ 
  generated by $f_1|_{\Omega},...,f_j|_{\Omega}...$ Then there exist finitely many functions
   $f_1,...,f_q$ such that $\mathcal{I}$ is generated by $f_1|_{\Omega},...,f_q|_{\Omega}$.
 \end{Cor}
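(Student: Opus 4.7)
The plan is to locate a finite integer $N$ such that $f_1,\dots,f_N$ already generate the same sheaf of ideals as $\{f_j\}_{j=1}^\infty$ on a neighborhood of $\bar\Omega$, and then promote this local finite generation to a global identity in $\OO(\Omega)$ via the Stein theorem.

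Let $\mathcal{F}$ denote the analytic sheaf on $W$ generated by $\{f_j\}_{j=1}^\infty$, which is coherent by the previous corollary. Since $W$ is pseudoconvex, hence Stein, the $W$-holomorphically convex hull $K$ of the compact set $\bar{\Omega}$ is compact in $W$. For each $z \in K$, an application of Theorem \ref{the germ} (centered at $z$, on a small neighborhood of $z$ in $W$) produces a polydisc $\Delta(z)$ around $z$ and an integer $M(z)$ such that the ideal of $\OO(\Delta(z))$ generated by $\{f_j|_{\Delta(z)}\}_{j \geq 1}$ is already generated by $\{f_j|_{\Delta(z)}\}_{j=1}^{M(z)}$. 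Extract a finite subcover $\Delta(z_1),\dots,\Delta(z_L)$ of $K$ and set $N := \max_{l} M(z_l)$. On the open neighborhood $V := \bigcup_l \Delta(z_l)$ of $K$, each $f_j$ with $j > N$ then lies locally in the ideal generated by $f_1,\dots,f_N$, so the coherent subsheaf $\mathcal{F}_N \subset \mathcal{F}$ generated by these first $N$ functions coincides stalkwise with $\mathcal{F}$ on $V$; in particular $f_1,\dots,f_N$ generate $\mathcal{F}$ over a neighborhood of $K$.

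Next I would globalize by invoking part (ii) of the Stein theorem stated above, applied on $W$ with the $W$-holomorphically convex compact set $K$ and the coherent sheaf $\mathcal{F}$ generated on a neighborhood of $K$ by $f_1,\dots,f_N$. For each $j > N$ the function $f_j \in \OO(W)$ is a section of $\mathcal{F}$ on a neighborhood of $K$, so the theorem yields a neighborhood $V' \subset V$ of $K$ and holomorphic functions $c_{j,1},\dots,c_{j,N} \in \OO(V')$ with
\[
f_j = \sum_{i=1}^N c_{j,i}\, f_i \quad \text{on } V'.
\]
Because $\bar{\Omega} \subset K \subset V'$, each such identity restricts to a relation on $\Omega$.

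To finish, any $f \in \mathcal{I}$ admits a finite expression $f = \sum_{j=1}^M d_j\, f_j|_\Omega$ with $d_j \in \OO(\Omega)$; replacing each $f_j|_\Omega$ for $j > N$ by its expansion from the previous paragraph writes $f$ as an $\OO(\Omega)$-linear combination of $f_1|_\Omega,\dots,f_N|_\Omega$, so $\mathcal{I}$ is generated by these finitely many functions. The main obstacle is precisely this globalization step: the local finite generation on each polydisc $\Delta(z_l)$ provided by Theorem \ref{the germ} does not by itself yield holomorphic coefficients defined simultaneously on a neighborhood of $\bar\Omega$, and it is the pseudoconvexity of $W$, encoded in part (ii) of the Stein theorem, that allows the local presentations to be glued into a single holomorphic expression valid on a neighborhood of $K$.
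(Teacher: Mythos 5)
The paper states Corollary \ref{Noether U} without a proof, placing it immediately after Theorem \ref{the germ}, the coherence criterion, and the Cartan-type theorem on $K$-holomorphically convex compacts in Stein manifolds; your argument is precisely the chain of deductions these theorems are set up to support. Your proof is correct and is essentially the intended one: Theorem \ref{the germ} plus compactness of $\bar{\Omega}$ (tacit in the paper, but needed and satisfied in every application) pins down a finite $N$ so that $f_1,\dots,f_N$ generate the coherent sheaf $\mathcal{F}$ on a neighbourhood of the $W$-holomorphically convex hull $K\supset\bar{\Omega}$, and part (ii) of the Stein theorem then expresses each $f_j$, $j>N$, as a holomorphic combination of $f_1,\dots,f_N$ near $K$, which restricts to $\Omega$ and collapses any finite expansion in $\mathcal{I}$ down to the first $N$ generators.
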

\section{Demailly's approximation}
We recall Demailly's approximation theorem, which allows to approximate plurisubharmonic functions by multiples
of logarithms of holomorphic functions. The readers can find the proof of this theorem in \cite{Dem92} (or
\cite{Dem14}). 
\begin{The}\label{Dem appr}
Let $\varphi$ be a plurisubharmonic function on a bounded pseudoconvex open set $\Omega\subset\C^n$. For every $m>0$, let $\HH_{\Omega} (m\varphi)$ be the Hilbert space of holomorphic functions $f$ on $\Omega$ such that 
$\int_{\Omega}|f|^2e^{-2m\varphi}dV_{2n}<+\infty$ and let 
$\varphi_m=\frac{1}{2m}\log\sum |g_{m,l}|^2$ where $(g_{m,l})$ is an orthonormal basis of $\HH_{\Omega}(m\varphi)$.
Then there are constants $C_1, C_2>0$ independent of $m$ such that\\
\begin{itemize}
\item[(a)] $\varphi(z)-\frac{C_1}{m}\leq \varphi_m (z)\leq \sup\limits_{|\zeta-z|<r}\varphi (\zeta) 
+\frac{1}{m}\log\frac{C_2}{r^n}\;$ for every $z\in\Omega$ and $r<d(z,\partial\Omega)$. In particular, $\varphi_m$
converges to $\varphi$ pointwise and in $L^1_{loc}$ topology on $\Omega$ when $m\to\infty$.
\item[(b)] $\nu (\varphi, z)-\frac{n}{m} \leq \nu (\varphi_m, z)\leq \nu (\varphi, z)$ for every $z\in\Omega$.
\end{itemize}
\end{The}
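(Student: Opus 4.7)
\emph{Upper bound in (a).} My plan is to prove (a) first and then derive (b) from its pointwise inequalities. The key identity for the upper bound is
\[
\sum_l |g_{m,l}(z)|^2 = \sup\bigl\{|f(z)|^2 : f \in \HH_\Omega(m\varphi),\ \|f\|=1\bigr\},
\]
since this quantity is the squared norm of the evaluation functional at $z$ on the Hilbert space $\HH_\Omega(m\varphi)$. For any such unit $f$ and any $r < d(z,\partial\Omega)$, the sub-mean-value inequality applied to the subharmonic function $|f|^2$ on $B(z,r)$ yields
\[
|f(z)|^2 \leq \frac{1}{V(B(z,r))} \int_{B(z,r)} |f|^2\,dV \leq \frac{\sup_{B(z,r)} e^{2m\varphi}}{c_n r^{2n}} \|f\|^2.
\]
Taking the supremum over unit $f$ and then $\tfrac{1}{2m}\log$ gives the upper bound in (a), with $C_2 = c_n^{-1/2}$.

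\emph{Lower bound in (a).} This is the technical heart of the argument and uses H\"ormander's $L^2$ estimate for $\bar\partial$ (equivalently, an Ohsawa--Takegoshi extension from the point $\{z_0\}$). At each $z_0 \in \Omega$ with $\varphi(z_0) > -\infty$ I would construct $f \in \HH_\Omega(m\varphi)$ with $f(z_0) = 1$ and $\|f\|^2 \leq C e^{-2m\varphi(z_0)}$, where $C$ depends only on $n$ and the geometry of $\Omega$. Let $\chi$ be a smooth cutoff equal to $1$ near $z_0$ and supported in a small ball in $\Omega$, and take the weight $\psi = 2m\varphi + 2n\log|w-z_0|$ on $\Omega$ (with a harmless $\epsilon|w|^2$ correction to secure strict plurisubharmonicity on the pseudoconvex $\Omega$). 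Solving $\bar\partial u = \bar\partial\chi$, the singularity of $\psi$ at $z_0$ forces $u(z_0)=0$ by integrability, so $f := \chi - u$ is holomorphic on $\Omega$, $f(z_0) = 1$, and the H\"ormander estimate furnishes the claimed norm bound on $f$. Substituting into $\sum_l |g_{m,l}(z_0)|^2 \geq |f(z_0)|^2/\|f\|^2$ and applying $\tfrac{1}{2m}\log$ yields $\varphi_m(z_0) \geq \varphi(z_0) - C_1/m$ (the inequality is trivial at points where $\varphi(z_0) = -\infty$). The main obstacle lies in this step: the weight $\psi$ must be plurisubharmonic and singular enough at $z_0$ to force $u(z_0) = 0$, yet tame enough that $\bar\partial\chi$ has manageable weighted $L^2$ norm, and the canonical choice $2n\log|w-z_0|$ is precisely what balances these constraints.

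\emph{Part (b).} Both Lelong-number inequalities are formal consequences of the pointwise estimates in (a), combined with the identity $\nu(\varphi, z) = \lim_{s\to 0^+} M_\varphi(z,s)/\log s$, where $M_\varphi(z,s) := \sup_{|w-z|<s}\varphi(w)$. The upper bound $\nu(\varphi_m, z) \leq \nu(\varphi, z)$ follows from $\varphi_m \geq \varphi - C_1/m$: dividing by the negative quantity $\log|\zeta-z|$ and taking $\zeta \to z$ makes the $C_1/(m\log|\zeta-z|)$ correction vanish, so the liminf of $\varphi_m(\zeta)/\log|\zeta-z|$ is bounded above by $\nu(\varphi,z)$. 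The lower bound $\nu(\varphi_m, z) \geq \nu(\varphi,z) - n/m$ follows by applying the upper bound of (a) at $\zeta$ with $r = |\zeta-z|$, yielding $\varphi_m(\zeta) \leq M_\varphi(z, 2|\zeta-z|) + \tfrac{1}{m}\log(C_2/|\zeta-z|^n)$; the $-n\log|\zeta-z|/m$ contribution gives exactly the $-n/m$ shift after dividing by $\log|\zeta-z|$ and passing to the limit.
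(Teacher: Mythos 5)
The paper does not prove Theorem~\ref{Dem appr}; it simply cites \cite{Dem92} and \cite{Dem14}, so there is no internal proof to compare against. Your reconstruction follows Demailly's original strategy closely: the extremal characterization of the Bergman kernel together with the sub-mean-value inequality for the upper bound in (a), an $L^2$ extension from the point $z_0$ for the lower bound, and purely formal manipulations of the two-sided estimates in (a) to deduce (b). Parts (a)-upper and (b) are complete and correct.

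One point deserves caution in the lower bound of (a). You present H\"ormander's estimate with the weight $\psi = 2m\varphi + 2n\log|w-z_0|$ as the main tool and mention Ohsawa--Takegoshi only parenthetically as ``equivalent.'' They are not interchangeable here: to run the H\"ormander argument you must bound $\int |\bar\partial\chi|^2 e^{-\psi}$, and the factor $e^{-2m\varphi}$ on the support of $\bar\partial\chi$ (an annulus away from $z_0$) need not be bounded --- $\varphi$ can take the value $-\infty$ there, and even where finite, any bound would depend on $\varphi$ and on $m$, destroying the uniformity of $C_1$. The Ohsawa--Takegoshi extension theorem is precisely what circumvents this: it produces $f\in\OO(\Omega)$ with $f(z_0)=1$ and $\int_\Omega |f|^2 e^{-2m\varphi} \le C\,e^{-2m\varphi(z_0)}$ with $C$ depending \emph{only} on $n$ and $\operatorname{diam}\Omega$, which is exactly the uniformity needed for $C_1$ to be independent of $m$. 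Demailly's proof in \cite{Dem92} uses Ohsawa--Takegoshi for this reason; you should too, rather than the bare $\bar\partial$-with-cutoff argument.
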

Using Theorem \ref{Dem appr}, we will prove the following proposition, which will be use to prove the main theorem.
\begin{Prop}\label{Prop fini sum app}
Under the assumptions of Theorem \ref{Dem appr}, for any open subset $U\Subset\Omega$,
 the following properties hold
\begin{itemize}
\item[(a)]$\int\limits_{U}e^{2m(\varphi_m-\varphi)}<\infty$.\\
\item[(b)] If $z\in U$ and $\nu (\varphi, z)< \frac{1}{m}$ then $\varphi_m$
 is smooth in a neighbourhood of $z$.\\
\item[(c)] If there exist only finitely many points $a_1,...,a_l$ in
 $\{z\in \bar{U}:\nu (\varphi, z) \geq \frac{1}{m} \}$,
  then  there exist $C,N>0$ such that
 \begin{center}
 $\varphi_m\geq N\sum\limits_{j=1}^l\log |z-a_j|-C$ on $U$.
 \end{center}
\end{itemize}
\end{Prop}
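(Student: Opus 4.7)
The plan is to exploit Corollary \ref{Noether U} and Theorem \ref{the germ} to replace the infinite basis by a finite subset on a relatively compact subdomain, and then control the full reproducing kernel by this finite sum of squared moduli. Fix open sets $U \Subset U_1 \Subset U_2 \Subset \Omega$. By Corollary \ref{Noether U}, the ideal $\mathcal{I} \subset \OO(U_2)$ generated by the countable family $\{g_{m,l}\}_{l\geq 1}$ is generated by finitely many of its elements, which I relabel as $g_{m,1},\ldots,g_{m,q}$.

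The core step for (a) is to prove the pointwise estimate $\sum_{l\geq 1}|g_{m,l}(z)|^2 \leq C_U \sum_{j=1}^q |g_{m,j}(z)|^2$ on $U$. By Theorem \ref{the germ} the ideal $\mathcal{I}$ is closed in the Fr\'echet space $\OO(U_2)$, so the continuous linear surjection $\Phi \colon \OO(U_2)^q \to \mathcal{I}$, $(h_j) \mapsto \sum_j h_j g_{m,j}$, is open by the open mapping theorem. This yields a constant $C>0$ such that every $f \in \mathcal{I}$ admits a representation $f = \sum_j h_j g_{m,j}$ with $\max_j \sup_{\overline{U}}|h_j| \leq C \sup_{\overline{U_1}}|f|$. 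Applied to $f = g_{m,l}$, Cauchy--Schwarz gives $|g_{m,l}(z)|^2 \leq qC^2 \bigl(\sup_{\overline{U_1}}|g_{m,l}|\bigr)^2 \sum_{j=1}^q |g_{m,j}(z)|^2$ on $\overline{U}$. Summing over $l$ is legitimate because the sub-mean-value inequality bounds $\sum_l \sup_{\overline{U_1}}|g_{m,l}|^2$ by $C_r \int_N \sum_l |g_{m,l}|^2\,dV = C_r \int_N e^{2m\varphi_m}\,dV$ for a relatively compact neighbourhood $N$ of $\overline{U_1}$, and this integral is finite since $\varphi_m$ is bounded above on $\overline{N}$ by the upper bound in Theorem \ref{Dem appr}. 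Integrating the pointwise estimate against $e^{-2m\varphi}$ and using $\int_\Omega |g_{m,j}|^2 e^{-2m\varphi} = 1$ then produces the bound in (a).

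For (b) I use that $K_m(w,w) := \sum_l |g_{m,l}(w)|^2$ is real-analytic wherever it is positive, so it suffices to show $K_m(w,w) > 0$ in a neighbourhood of $z$. Upper-semicontinuity of the Lelong number provides a neighbourhood $V$ of $z$ on which $\nu(\varphi,\cdot) < 1/m$, and Skoda's integrability theorem gives $e^{-2m\varphi} \in L^1_{\text{loc}}(V)$. A standard H\"ormander $\bar\partial$-construction with auxiliary singular weight of the form $|\cdot - w|^{-2n}$ (i.e.\ the Ohsawa--Takegoshi extension technique) then produces, for each $w \in V$, a function $f_w \in \HH_\Omega(m\varphi)$ with $f_w(w) \neq 0$; hence $K_m(w,w) \geq |f_w(w)|^2/\|f_w\|^2 > 0$ on $V$ and $\varphi_m$ is smooth there.

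Part (c) combines (b) with the finite-generation setup of (a): any common zero of $g_{m,1},\ldots,g_{m,q}$ must satisfy $\nu(\varphi,\cdot)\geq 1/m$, so the common zero set in a sufficiently small neighbourhood of $\overline{U}$ meets $\overline{U}$ exactly in $\{a_1,\ldots,a_l\}$. The Lojasiewicz inequality applied to the analytic function $\sum_{j=1}^q |g_{m,j}|^2$ then yields $N,C>0$ with $\sum_j |g_{m,j}|^2 \geq C \prod_j |z-a_j|^{2N}$ on $U$, and taking logarithms gives the claimed inequality for $\varphi_m$ (up to adjusting $N$ and $C$). The main obstacle throughout is the pointwise reduction in (a); it succeeds only because Theorem \ref{the germ} provides both finite generation and closedness of the ideal, the latter being precisely what allows the open mapping theorem to be invoked in the Fr\'echet setting and to give uniform control on the division coefficients.
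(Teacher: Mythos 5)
Your proof is correct, and parts (b) and (c) coincide in substance with the paper's (Skoda integrability plus a reproducing-kernel/H\"ormander argument for (b); a Nullstellensatz/Lojasiewicz inequality for (c), which the paper packages as Lemma~\ref{lemgz}). The genuine divergence is in part (a). You derive the pointwise bound $e^{2m\varphi_m}\leq C_U\sum_{j\leq q}|g_{m,j}|^2$ by using Corollary~\ref{Noether U} to extract a finite generating set and then invoking the open mapping theorem for the Fr\'echet-space surjection $(h_j)\mapsto\sum h_jg_{m,j}$ to obtain a uniform sup-norm bound on division coefficients, followed by Cauchy--Schwarz and a sub-mean-value summation. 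The paper sidesteps the open mapping theorem by ``diagonalizing'': it sets $f_j(z,w)=g_{m,j}(z)\overline{g_{m,j}(\bar w)}\in\OO(\Omega\times\Omega)$, notes that $f=\sum f_j$ converges locally uniformly and that its germ lies everywhere in the closed, finitely generated ideal sheaf spanned by the $f_j$, so that $f=\sum_{j\leq M}a_jf_j$ globally on $V\times V$ with holomorphic $a_j$; restricting to $w=\bar z$ gives the pointwise bound with explicit continuous coefficients $a_j(z,\bar z)$. Both routes need the same coherence input; the paper's is more constructive (it hands you the division coefficients directly), while yours is softer but brings in heavier functional-analytic machinery. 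One point of imprecision on your side: the closedness of $\mathcal{I}$ in $\OO(U_2)$, which you need so that $\mathcal{I}$ is a Fr\'echet space and the open mapping theorem applies, is not literally supplied by Theorem~\ref{the germ} (that is a statement on a small polydisc). It is the global Cartan closedness theorem, obtained by the same globalization step the paper uses to conclude $f\in\mathcal{J}$, so the fact you use is true but the citation is a shade too quick.
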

\begin{proof}
 (a) There is a corresponding result in the case of compact K\"ahler manifolds, which was proved  in
 \cite[p.164]{Dem10}, \cite[p.10]{Dem14}. The same arguments can be applied for the case of domains in $\C^n$.
 
  Set $f_j:\Omega\times\Omega\to \C$, $f_j(z,w)=g_{m,j}(z)\overline{g_{m,j}(\bar{w})}$.
 We have, for any $z,w\in V$ and $l>0$,
 \begin{center}
 $\sum\limits_{j\leq l}|f_j(z,w)|\leq (\sum\limits_{j\leq l}|g_{m,j}(z)|^2)^{1/2}
 (\sum\limits_{j\leq l}|g_{m,j}(\bar{w})|^2)^{1/2}\leq e^{m(\varphi_m(z)+\varphi(\bar{w}))}.$
 \end{center}
 Then the series $\sum\limits_{j=1}^{\infty}f_j(z,w)$ converges uniformly  on compact subsets of $\Omega\times\Omega$.
 
  Let $U\Subset V\Subset \Omega$.
  We denote by $\mathcal{J}$ the ideal of $\OO (V\times V)$ generated by 
 $\{f_j\}_{j=1}^{\infty}$. It follows from Theorem \ref{the germ} 
 that $f=\sum\limits_{j=1}^{\infty}f_j\in\mathcal{J}$ .
 
  Moreover, by Corollary 
 \ref{Noether U} we can choose $M\gg 1$ such that $\mathcal{J}$ is generated by $f_1,...,f_M$. Hence,
 \begin{center}
 $f=\sum\limits_{j=1}^Ma_jf_j,$
 \end{center}
 where $a_j\in\OO(V\times V)$. Then there exists $C>0$ such that
 \begin{center}
 $e^{2m\varphi_m}=f(z,\bar{z})\leq\sum\limits_{j=1}^M|a_j(z,\bar{z})||g_{m,j}(z)|^2\leq
 C\sum\limits_{j=1}^M|g_{m,j}(z)|^2,$
 \end{center}
 for any $z\in U$. Thus
 \begin{center}
 $\int\limits_Ue^{2m(\varphi_m-\varphi)}\leq CM<\infty.$
 \end{center}
 
(b) If $\in U$ and $\nu (\varphi, z)< \frac{1}{m}$, then $2m\varphi$ is intergrable in a neighborhood of
$z$ (see \cite{Sko72}). Hence,
 there exists $g\in\HH_{\Omega}(m\varphi)$ such that $g(z)\neq 0$. Hence $\varphi_m(z)\neq -\infty$.
 Thus $\varphi_m$  is smooth in a neighbourhood of $z$.\\
 
 (c) This is a corollary of Lemma \ref{lemgz}.
\end{proof}
\begin{Lem}\label{lemgz}
Let $g_1,...,g_M\in\OO (\Delta^n)$ such that
$$\{g_1=...=g_M=0\}=\{0\}.$$
Then there exist $C,N>0$ such that, on $\Delta^n_{1/2}$, 
$$|g_1|^2+...+|g_M|^2\geq C(|z_1|^2+...+|z_n|^2)^N.$$
\end{Lem}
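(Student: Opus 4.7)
The plan is to combine the analytic Nullstellensatz near the origin with a compactness argument away from the origin.

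First, since the common zero set of $g_1,\ldots,g_M$ near $0$ is $\{0\}$, the analytic Nullstellensatz (R\"uckert) applied in the local ring $\OO_0$ gives that the radical of the ideal $\mathcal{I}=(g_1,\ldots,g_M)\subset\OO_0$ equals the maximal ideal $(z_1,\ldots,z_n)$. Hence for each $j=1,\ldots,n$ there exist an integer $N_j\geq 1$ and germs $h_{1j},\ldots,h_{Mj}\in\OO_0$ such that
$$z_j^{N_j}=\sum_{i=1}^M h_{ij}(z)\,g_i(z).$$
These identities hold simultaneously on some polydisk $\Delta_{r_0}^n$ with $0<r_0\leq 1/2$, on which all $h_{ij}$ are uniformly bounded by a constant $K$. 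Cauchy--Schwarz then gives $|z_j|^{2N_j}\leq K^2 M(|g_1|^2+\cdots+|g_M|^2)$ on $\Delta_{r_0}^n$.

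Next, setting $N:=\max_j N_j$ and using $|z_j|\leq 1/2\leq 1$ on $\Delta^n_{1/2}$ to get $|z_j|^{2N}\leq|z_j|^{2N_j}$, I would sum over $j$ and apply the power-mean inequality $(\sum_j|z_j|^2)^N\leq n^{N-1}\sum_j|z_j|^{2N}$ to conclude
$$(|z_1|^2+\cdots+|z_n|^2)^N\leq C_1(|g_1|^2+\cdots+|g_M|^2)$$
on $\Delta_{r_0}^n$ for a suitable $C_1>0$. On the compact set $K:=\overline{\Delta^n_{1/2}}\setminus\Delta^n_{r_0/2}$, the continuous function $|g_1|^2+\cdots+|g_M|^2$ is strictly positive since $0\notin K$, so it is bounded below by some $c>0$, while $(|z_1|^2+\cdots+|z_n|^2)^N$ is bounded above. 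Hence the desired inequality also holds on $K$ for a suitable constant, and taking the minimum of the two constants extends it to all of $\Delta^n_{1/2}$.

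The main obstacle is the invocation of the analytic Nullstellensatz: the paper has already cited that $\OO_0$ is Noetherian and that ideals are closed under coefficient-wise convergence, but the Nullstellensatz requires further input (typically Weierstrass preparation and normalization of analytic sets). Either a textbook reference should be supplied or one would sketch a short direct argument using Weierstrass preparation to reduce inductively to the one-dimensional case, where the claim is immediate from the fact that each non-trivial $g_i$ has a zero of finite order at $0$. All remaining steps are routine estimates.
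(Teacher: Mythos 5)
Your proof is correct and follows essentially the same route as the paper: the paper likewise invokes the analytic Nullstellensatz (with a citation to Huybrechts, p.~19) to write $z_k^N$ as a holomorphic combination of the $g_j$ on a small polydisk, obtains the lower bound there, and then uses the strict positivity of $\sum|g_j|^2$ on the complementary compact annular region to conclude. One small point worth noting: the paper's final line takes $C=\max\{C_1,C_2\}$, which is a typo; taking the minimum, as you do, is what is actually needed.
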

\begin{proof}
It follows from Hilbert's Nullstellensatz theorem (see \cite[p.19]{Huy05}) that there exist $N>0$, $1>r>0$
and holomorphic functions $f_{jk}\in\OO (\Delta^n_r)$ satisfying, on $\Delta^n_r$,
\begin{center}
$z_k^N=\sum\limits_{j=1}^m g_j.f_{ij}$,
\end{center}
for $k=1,...,n$. Then there exists $C_1>0$ such that, on $\Delta^n_{r/2}$,
$$|g_1|^2+...+|g_M|^2\geq C_1(|z_1|^2+...+|z_n|^2)^N.$$
In the other hand, 
$\inf\limits_{\Delta^n_{1/2}\setminus\Delta^n_{r/2}}(|g_1|^2+...+|g_M|^2)>0$. Then there exists $C_2>0$ such 
that, on $\Delta^n_{1/2}\setminus\Delta^n_{r/2}$,
$$|g_1|^2+...+|g_M|^2\geq C_2(|z_1|^2+...+|z_n|^2)^N.$$
Denote $C=\max\{C_1,C_2\}$, we have, on $\Delta^n_{1/2}$,
$$|g_1|^2+...+|g_M|^2\geq C(|z_1|^2+...+|z_n|^2)^N.$$
\end{proof}
Lemma \ref{lemgz} is also an immediate corollary of Lojasiewicz inequality:
\begin{The}
Let $f: U\rightarrow \R$  be a real-analytic function on an open set $U$ in $\R^n$,
 and let $Z$ be the zero locus of $f$. Assume that $Z$ is not empty.
 Then, for any compact set $K$ in $U$, there are positive constants $C$ and $\alpha$ such that, 
 for every $x\in K$
 \begin{center}
 $dist (x,Z)^{\alpha}\leq C|f(x)|.$
 \end{center}
\end{The}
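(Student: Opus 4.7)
The plan is to prove the Lojasiewicz inequality following Lojasiewicz's classical approach, by induction on the dimension $n$ via the Weierstrass preparation theorem. First, I would reduce to the local setting: since $K$ is compact, it suffices to establish the inequality on a small neighborhood of each $x_0 \in K$ with constants depending on $x_0$, then extract a finite subcover. Off $Z$ one has $|f| \geq c > 0$ on a small closed ball and the bound is trivial with any exponent, so the interesting case is $x_0 \in Z$, which after translation we may take to be the origin.

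The base case $n=1$ is immediate: writing $f(x) = x^k g(x)$ with $g(0)\neq 0$, the zero set is $\{0\}$ locally, so $|x|^k \leq C|f(x)|$ on a small interval. For the inductive step, after a generic linear change of coordinates, I would invoke the Weierstrass preparation theorem (applied after complexification) to factor $f(x',x_n) = u(x',x_n)\, P(x',x_n)$ near the origin, where $u$ is a nonvanishing real-analytic unit and
\[
P(x',x_n) = x_n^d + a_1(x')x_n^{d-1} + \cdots + a_d(x')
\]
is a Weierstrass polynomial of degree $d$ in $x_n$ with real-analytic coefficients $a_j(x')$ vanishing at the origin. Since $|f|$ and $|P|$ differ by a bounded multiplicative factor, I can replace $f$ by $P$, and the zero sets agree locally.

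Writing $P(x',x_n) = \prod_{j=1}^d (x_n - \lambda_j(x'))$ gives the key estimate
\[
\min_j |x_n - \lambda_j(x')| \leq |P(x',x_n)|^{1/d},
\]
which controls the $x_n$-distance to $Z$ in terms of $|f|$. I would then apply the inductive hypothesis to the discriminant $\Delta(x')$ of $P$ regarded as a polynomial in $x_n$: this is a real-analytic function of $x'$, and off $\{\Delta = 0\}$ the roots $\lambda_j(x')$ are locally real-analytic. Combining (i) the estimate above, (ii) the inductive Lojasiewicz bound applied to $\Delta$ to handle the exceptional $x'$-locus where roots collide, and (iii) continuity of the roots $\lambda_j$, one assembles $\mathrm{dist}((x',x_n), Z)^{\alpha} \leq C|f(x)|$ for a compounded exponent $\alpha$.

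The main obstacle is the bookkeeping in the inductive step and, in particular, the fact that near $\{\Delta = 0\}$ the roots $\lambda_j(x')$ are only continuous (algebroid of Puiseux type) rather than real-analytic, so one cannot directly feed them into the inductive hypothesis. This forces a careful stratified treatment in which the inductive hypothesis is applied not to $\lambda_j$ but to $\Delta$, and the exceptional locus is absorbed into a larger compound exponent. If one has access to stronger machinery, a cleaner alternative I would prefer is via Hironaka's resolution of singularities: a proper real-analytic modification $\pi : \widetilde U \to U$ pulls $f$ back to a local monomial times a nonvanishing unit, for which the inequality is essentially trivial, and properness of $\pi$ pushes the estimate back down to $U$ after adjusting constants.
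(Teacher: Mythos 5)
The paper does not prove this statement; it simply records it as a known result and refers the reader to Lojasiewicz, Malgrange, and Ji--Koll\'ar--Shiffman. So I am evaluating your sketch on its own terms rather than against a proof in the text.

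Your outline follows the classical inductive Weierstrass route, and you correctly identify the reduction to the local case, the trivial $n=1$ base, and the role of the discriminant. But there is a genuine gap that is more fundamental than the Puiseux bookkeeping you flag. After factoring $P(x',x_n)=\prod_{j}(x_n-\lambda_j(x'))$, the roots $\lambda_j(x')$ are in general \emph{complex}. The estimate
$\min_j|x_n-\lambda_j(x')|\leq |P(x',x_n)|^{1/d}$
therefore controls the ($x_n$-direction) distance to the \emph{complex} zero variety of $P$, not the distance to the real zero locus $Z$. Your claim that it ``controls the $x_n$-distance to $Z$'' is false whenever the nearest root is non-real, and for a real-analytic $f$ this is the typical situation: $Z$ can be drastically smaller than the trace of the complex variety. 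A concrete example is $f(x',x_n)=x_n^{2}+g(x')^{2}$ with $g$ real-analytic: the Weierstrass roots are $\pm i\,g(x')$, hence $\min_j|x_n-\lambda_j|=\sqrt{x_n^{2}+g(x')^{2}}=|f|^{1/2}$, which is always small near $Z$, whereas $\mathrm{dist}\bigl((x',x_n),Z\bigr)=\mathrm{dist}\bigl((x',x_n),\{x_n=0,\ g(x')=0\}\bigr)$ can be much larger (take $g(x')=x_1^{k}$ with $k$ large). So the ``key estimate'' does not by itself yield $\mathrm{dist}(x,Z)^{\alpha}\leq C|f(x)|$, and the inductive step collapses. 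This is precisely where the real case differs from the complex-holomorphic case, for which your argument would be essentially complete; the genuine content of Lojasiewicz's proof is the stratification machinery needed to recover information about the real zero set from the Weierstrass--Puiseux data.

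Your second suggestion, via Hironaka resolution, does work cleanly and I would recommend it as the proof to actually write. After a proper real-analytic modification $\pi:\widetilde U\to U$ with $f\circ\pi=u(y)\prod_i y_i^{a_i}$ locally, the zero set upstairs is a union of coordinate hyperplanes, so $|f\circ\pi(y)|\geq c\,\mathrm{dist}\bigl(y,\pi^{-1}(Z)\bigr)^{\sum a_i}$ on compacts. Since $\pi$ is proper, $\pi^{-1}(K)$ is compact and $\pi$ is Lipschitz there, and since $\pi\bigl(\pi^{-1}(Z)\bigr)=Z$ one gets $\mathrm{dist}(x,Z)\leq L\,\mathrm{dist}(y,\pi^{-1}(Z))$ for $y\in\pi^{-1}(x)$; combining the two gives the stated inequality with a compounded exponent. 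This route bypasses entirely the real-versus-complex root issue that sinks the naive Weierstrass induction.
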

We refer the reader to \cite{Loj59, Mal66, JKS92} for more details.

Now we recall some previous results on Parabolic complex Monge-Amp\`ere equation.
\section{Parabolic complex Monge-Amp\`ere equation}
\subsection{Hou-Li theorem}\label{sec HL}
 \begin{flushleft}
 The Hou-Li theorem \cite{HL10} states
 that equation \eqref{KRF} has a unique smooth solution when the conditions are good 
 enough. We state the precise problem to be studied:
 \end{flushleft}
 \begin{equation}
 \label{HLKRF}
 \begin{cases}
 \begin{array}{ll}
 \dot{u}=\log\det (u_{\alpha\bar{\beta}})+f(t,z,u)\;\;\;&\mbox{on}\;\Omega\times (0,T),\\
 u=\varphi&\mbox{on}\;\partial\Omega\times [0,T),\\
 u=u_0&\mbox{on}\;\bar{\Omega}\times\{ 0\}.\\
 \end{array}
 \end{cases} 
 \end{equation}
 
 We first need the notion of a subsolution to \eqref{HLKRF}.
 \begin{Def}
 \label{HLsubsol}
 A function $\underline{u}\in C^{\infty}(\bar{\Omega}\times [0,T))$
 is called a \emph{subsolution} of the equation \eqref{HLKRF} if and only if
 \begin{equation}
 \label{subsolu.houli}
 \begin{cases}
 \U(.,t) \mbox{is a strictly plurisubharmonic function,}\\
 \dot{\U}\leq \log\det (\U)_{\alpha\bar{\beta}}+f(t,z,\U),\\
 \U|_{\partial\Omega\times (0,T)}=\varphi|_{\partial\Omega\times (0,T)},\\
 \U(.,0)\leq u_0 .
 \end{cases} 
 \end{equation}
 
 \end{Def}
 \begin{The}\label{houli}
 Let $\Omega\subset\C^n$ be a bounded domain with smooth boundary. Let $T\in (0,\infty]$. Assume
 that
 \begin{itemize}
 \item $\varphi$ is a smooth function in $\bar{\Omega}\times [0,T)$.
 \item $f$ is a smooth function in $[0,T)\times\bar{\Omega}\times\R$ non increasing in the
 lastest variable.
 \item $u_0$ is a smooth strictly plurisubharmonic funtion in a neighbourhood of $\Omega$.
 \item $u_0(z)=\varphi (z,0),\;\forall z\in\partial\Omega$.
 \item The compatibility condition is satisfied, i.e.
 $$
 \dot{\varphi}=\log\det (u_0)_{\alpha\bar{\beta}}+f(t,z,u_0),
 \;\;\forall (z,t)\in \partial\Omega\times \{ 0 \}.
 $$
 \item There exists a subsolution to the equation \eqref{HLKRF}.
 \end{itemize} 
 Then there exists a unique solution 
 $u\in C^{\infty}(\Omega\times (0,T))\cap C^{2;1}(\bar{\Omega}\times [0,T))$ of the equation
 \eqref{HLKRF}.
  \end{The}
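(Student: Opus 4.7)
The plan is to argue existence by short-time solvability followed by a continuation argument supported by a priori $C^\infty$ estimates, and to treat uniqueness separately by the maximum principle. Uniqueness is the easier half: if $u_1$ and $u_2$ are two smooth solutions, write $w=u_1-u_2$ and observe that, by concavity of $\log\det$ on positive-definite Hermitian matrices,
\[
\dot{w} \;\leq\; a^{\alpha\bar{\beta}} w_{\alpha\bar{\beta}} + b\,w
\]
with a positive-definite matrix $(a^{\alpha\bar{\beta}})$ and $b\leq 0$ coming from the monotonicity of $f$ in its last argument. Since $w$ vanishes on the parabolic boundary, the weak maximum principle forces $w\equiv 0$.

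For existence, the compatibility condition, smoothness of the data, and strict plurisubharmonicity of $u_0$ allow a linearization / implicit function theorem argument in the parabolic H\"older space $C^{2+\alpha,1+\alpha/2}$, yielding a smooth solution on $\bar{\Omega}\times[0,\tau_0)$ for some $\tau_0>0$. Let $T^*\in(0,T]$ be the supremum of times such that a smooth solution exists on $\bar{\Omega}\times[0,T^*)$. The heart of the proof is the uniform $C^\infty$ estimate of any such solution on $\bar{\Omega}\times[0,T^*-\delta]$, for every $\delta>0$. I would organize the estimates in the standard hierarchy: a $C^0$ bound obtained by sandwiching $u$ between $\U$ and a linear-in-$t$ upper barrier; a bound on $\dot u$ via maximum principle applied to the $t$-differentiated equation, using $\dot u|_{t=0}$ from the compatibility condition; a boundary gradient bound using $\U$, $\varphi$, and the defining function $\rho$; a boundary $C^2$ estimate in tangent--tangent, tangent--normal, and normal--normal directions using the Caffarelli--Kohn--Nirenberg--Spruck construction adapted to parabolic Monge--Amp\`ere; an interior Laplacian bound via a Yau-type maximum principle on a quantity such as $\log\Delta u + \phi(|\nabla u|^2) - \gamma u$; and finally parabolic Evans--Krylov--Caffarelli to upgrade to $C^{2,\alpha}$, with Schauder bootstrapping providing all higher derivatives. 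Once these are in hand, $u(\cdot,t)$ extends smoothly to $t=T^*$ and short-time existence restarts from this data, forcing $T^*=T$.

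The main obstacle is the boundary Hessian estimate, particularly the pure normal--normal second derivative. This is where the subsolution hypothesis intervenes essentially: on $\partial\Omega$ one writes $u_{\nu\nu}$ in terms of tangential derivatives plus a remainder that must be controlled from above using the equation itself and the strict plurisubharmonicity of $\U$, so as to obtain a lower bound on the smallest eigenvalue of $(u_{\alpha\bar\beta})$ at the boundary. Adapting the elliptic CKNS argument to the parabolic setting requires handling a time-derivative term, which comes in with a favorable sign precisely because $\dot u$ is already controlled in the second step. All other estimates are either classical maximum-principle computations or black-box regularity results, so once the boundary $C^2$ estimate is established the remainder is essentially routine.
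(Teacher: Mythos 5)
The paper does not actually prove Theorem~\ref{houli}: it is quoted from Hou--Li \cite{HL10} and used as a black box, with only a brief remark afterwards about when a subsolution exists and how to upgrade regularity away from $\partial\Omega\times\{0\}$. There is therefore no in-paper argument to compare your sketch against. That said, your plan---short-time existence in parabolic H\"older spaces via linearization and the implicit function theorem, continuation on a maximal interval, the usual hierarchy of a priori estimates ($C^0$ by barriers, $\dot u$ by differentiating in $t$ and invoking compatibility at $t=0$, boundary gradient via $\underline u$ and $\rho$, boundary Hessian in the CKNS style, interior Laplacian via a Yau-type test quantity, then parabolic Evans--Krylov and Schauder bootstrap), together with maximum-principle uniqueness from the concavity of $\log\det$ and the monotonicity of $f$ in $u$---is exactly the expected route and is consistent with the Hou--Li paper and the surrounding literature (e.g.\ \cite{BG13, IS13, Do15a}). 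The one place I'd flag as not yet a proof is the normal--normal boundary second-order estimate: you correctly identify it as the crux and the point where the subsolution enters essentially, but at this level of generality (arbitrary smooth bounded domain, not assumed pseudoconvex, only the existence of a subsolution $\underline u$) the construction of the barrier for $u_{\nu\nu}$ and the quantitative use of $dd^c\underline u>0$ requires a genuine CKNS-type computation; that step would need to be written out before the sketch becomes a complete argument.
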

 If $\Omega$ is a bounded smooth strictly pseudoconvex domain of $\C^n$ then 
 a subsolution always exists on $\bar{\Omega}\times [0,T')$ 
 (for example, $\U= M\rho+\varphi$, where $M\gg 1$ and
 $\rho$ is a smooth strictly plurisubhamonic function such that $\rho|_{\partial\Omega}=0$ ), for any 
 $0<T'<T$, and so Theorem \ref{houli} does not need the additional assumption of existence of a subsolution.
 
  Using regularity theories (see, for example, \cite{GT83, CC95, Lieb96, Do15a}), 
  we can conclude that the solution $u$
 of \eqref{HLKRF} is smooth outside  $\partial\Omega\times\{0\}$
  if the assumption of Theorem \ref{houli} holds. If $\Omega$ is a bounded smooth strictly pseudoconvex domain of $\C^n$,
   $\varphi$ is smooth in $\bar{\Omega}\times [0,T]$ and
  $f$ is smooth in $[0,T]\times\bar{\Omega}\times\R$, then $u$ is the solution of \eqref{HLKRF} 
  in $\bar{\Omega}\times [0, T+\delta)$, where $0<\delta\ll 1$. Hence $u$ can be approximated in 
  $\bar{\Omega}\times [0,T]$ by the smooth functions $u(z,t+\frac{1}{m})$. Thus, for approximations, $u$
  is as good as smooth in $\bar{\Omega}\times [0,T]$. 
 \subsection{Maximum principle}
  \begin{flushleft}
  The following maximum principle is a basic tool to
  establish upper and lower bounds in the sequel (see \cite{BG13} and \cite{IS13} for the proof).
  \end{flushleft}
  \begin{The}\label{max prin}
  Let $\Omega$ be a bounded domain of $C^n$ and $T>0$. 
  Let $\{\omega_t\}_{0<t<T}$ be a continuous family of continuous 
    positive definite Hermitian forms on $\Omega$.  Denote by $\Delta_t$ the Laplacian with 
   respect to $\omega_t$:
   $$
   \Delta_t f=\dfrac{n\omega_t^{n-1}\wedge dd^cf}{\omega_t^n},\;\forall f\in C^{\infty}(\Omega).
   $$
     Suppose that
  $H \in C^{\infty}(\Omega \times (0,T)) \cap C(\bar{\Omega}\times [0,T))$ and satisfies\\
  \begin{center}
  $(\frac{\partial}{\partial t}-\Delta_t)H \leq 0 \:$ 
  or
  $\: \dot{H}_t \leq \log\dfrac{(\omega_t+dd^c H_t)^n}{\omega_t^n}$.
  \end{center}
  Then $\sup\limits_{\bar{\Omega}\times [0,T)} H = \sup\limits_{\partial_P(\Omega \times
  [0,T))}H$. Here we denote $\partial_P(\Omega\times (0,T))=\left(\partial\Omega\times (0,T) \right)
  \cup \left( \bar{\Omega}\times\{ 0\}\right)$.
  \end{The}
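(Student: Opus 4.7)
The plan is to argue by contradiction: I will suppose that $M := \sup_{\bar{\Omega}\times [0,T)} H$ strictly exceeds $M_\partial := \sup_{\partial_P(\Omega\times [0,T))} H$ and derive a contradiction by forcing the maximum to be attained at an interior point where the sign of $\dot H$ conflicts with the differential inequality. Since $t=T$ is excluded from the domain, I will first reduce to a compact time slab: fix $T' < T$ with $\sup_{\bar\Omega\times[0,T']} H > M_\partial$ (such $T'$ exists by the assumed strict inequality), and from now on work on the compact set $\bar\Omega\times[0,T']$, on which $H$ is continuous.

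Next, I will introduce the standard parabolic perturbation $H_\varepsilon(z,t) = H(z,t) - \varepsilon t$. For all sufficiently small $\varepsilon>0$ the gap survives, so $\sup_{\bar\Omega\times[0,T']} H_\varepsilon > \sup_{\partial_P(\Omega\times[0,T'])} H_\varepsilon$. By compactness $H_\varepsilon$ attains its max at some $(z_1,t_1)$, and by the strict gap this point must lie in $\Omega\times(0,T']$. The elementary first-/second-order conditions for a maximum (interior in $z$, right endpoint or interior in $t$) then give
\begin{equation*}
\dot H(z_1,t_1) \ge \varepsilon > 0, \qquad \bigl(H_{\alpha\bar\beta}(z_1,t_1)\bigr) \le 0 \text{ as a Hermitian form.}
\end{equation*}

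Now I will plug these two facts into each of the two alternative hypotheses. For the hypothesis $(\partial_t - \Delta_t) H \le 0$, the spatial negativity gives $\Delta_t H = n\,\omega_t^{n-1}\wedge dd^c H/\omega_t^n \le 0$ at $(z_1,t_1)$ (since $\omega_t^{n-1}$ is a positive $(n-1,n-1)$-form and $dd^c H \le 0$), so $\dot H \le \Delta_t H \le 0$, contradicting $\dot H \ge \varepsilon$. For the hypothesis $\dot H \le \log\bigl((\omega_t + dd^c H)^n/\omega_t^n\bigr)$, since $dd^c H\le 0$ one has $0 \le \omega_t + dd^c H \le \omega_t$ as Hermitian forms, and monotonicity of the determinant on positive Hermitian matrices gives $(\omega_t+dd^c H)^n \le \omega_t^n$, so the right-hand side is $\le 0$, again contradicting $\dot H \ge \varepsilon$. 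The contradiction yields $\sup_{\bar\Omega\times[0,T']} H \le M_\partial$ for every $T'<T$, and taking $T'\nearrow T$ proves the theorem.

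The only genuinely delicate point I expect is justifying that the maximum $(z_1,t_1)$ of $H_\varepsilon$ really lies in the parabolic interior and that the one-sided time derivative there satisfies $\dot H \ge \varepsilon$ even when $t_1 = T'$; this is handled by noting that the maximum over $[0,T']$ forces $\partial_t H_\varepsilon \ge 0$ at $t_1$ (strict equality if $t_1<T'$, a one-sided inequality if $t_1=T'$), which in either case yields $\dot H(z_1,t_1) \ge \varepsilon$. Everything else is a direct comparison of signs, so no further technical obstacles arise.
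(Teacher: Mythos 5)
The paper does not actually prove Theorem \ref{max prin}; it defers the proof to \cite{BG13} and \cite{IS13}. Your argument is the standard parabolic maximum-principle proof (perturb by $-\varepsilon t$, locate the maximum in the parabolic interior, read off the sign contradiction), and it is essentially correct. A few remarks.

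First, your handling of the two alternatives is sound: at the interior spatial maximum $dd^cH\le 0$, which makes $\omega_t^{n-1}\wedge dd^cH\le 0$ because, diagonalizing $\omega_t$, the wedge picks out the (nonpositive) diagonal entries of $dd^cH$ with positive weights; and the time condition at $t_1\in(0,T']$ gives $\dot H\ge\varepsilon$ whether $t_1<T'$ or $t_1=T'$. Second, for the log-Monge--Amp\`ere alternative, your chain $0\le\omega_t+dd^cH\le\omega_t\Rightarrow(\omega_t+dd^cH)^n\le\omega_t^n$ tacitly uses that $\omega_t+dd^cH\ge 0$ at the maximum point. This is not a formal consequence of $dd^cH\le 0$: one can cook up a pointwise example (e.g. $\omega=I$, $dd^cH=-3I$ in $\C^2$) where $dd^cH\le 0$ yet $(\omega+dd^cH)^n/\omega^n>1$, so the RHS of the inequality would be positive. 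You should therefore state explicitly that the hypothesis $\dot H\le\log\bigl((\omega_t+dd^cH)^n/\omega_t^n\bigr)$ is understood, as is customary in this literature and in \cite{BG13,IS13}, to carry the implicit assumption $\omega_t+dd^cH_t>0$ (otherwise the expression is not a well-defined Monge--Amp\`ere density). With that convention spelled out, the determinant monotonicity step is correct and the proof is complete.
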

  \begin{Cor}(Comparison principle)\label{compa log} 
  Let $\Omega$ be a bounded domain of $\C^n$ and $A\geq 0, T>0$. Let
   $u,v\in C^{\infty}(\Omega\times (0,T))\cap C(\bar{\Omega}\times [0,T))$ 
  satisfy:
  \begin{itemize}
  \item $u(.,t)$ and $v(.,t)$ are strictly plurisubharmonic functions for any $t\in [0,T)$,
  \item $\dot{u}\leq \log\det (u_{\alpha\bar{\beta}})-Au +f(z,t),$
  \item $\dot{v}\geq \log\det (v_{\alpha\bar{\beta}})-Av+f(z,t),$
  \end{itemize}
  where $f\in C^{\infty}(\bar{\Omega}\times [0,T))$.\\
  Then $\sup\limits_{\Omega\times (0,T)}(u-v)\leq max\{ 0, 
  \sup\limits_{\partial_P(\Omega\times (0,T))}(u-v)\}$.
  \end{Cor}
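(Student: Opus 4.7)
The plan is to argue by contradiction, combining the strict plurisubharmonicity of $v$ with a standard $-\epsilon t$ perturbation to implement the idea of Theorem \ref{max prin} pointwise at an interior maximum. Set $H := u - v$ and $M := \max\{0,\sup_{\partial_P(\Omega\times(0,T))}(u-v)\}$. Suppose, toward a contradiction, that $\sup_{\Omega\times(0,T)} H > M$, so there exists $(z_0,t_0)\in\Omega\times(0,T)$ with $H(z_0,t_0) > M$. For $0 < \epsilon < (H(z_0,t_0)-M)/t_0$ consider the perturbation
\[
H_\epsilon(z,t) := u(z,t) - v(z,t) - \epsilon t - M,
\]
which satisfies $H_\epsilon \leq -\epsilon t \leq 0$ on the parabolic boundary, while $H_\epsilon(z_0,t_0) > 0$.

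Next, I would fix any $T_1\in(t_0,T)$; then $H_\epsilon\in C(\bar\Omega\times[0,T_1])$ attains its supremum on this compact set at some $(z^\ast,t^\ast)$ with $H_\epsilon(z^\ast,t^\ast)>0$. Since $H_\epsilon \leq 0$ on the parabolic boundary, necessarily $z^\ast\in\Omega$ and $t^\ast\in(0,T_1]$. At such a maximum the first/second derivative tests yield
\[
dd^c H_\epsilon(z^\ast,t^\ast)\leq 0,\qquad \partial_t H_\epsilon(z^\ast,t^\ast)\geq 0,
\]
the latter holding even when $t^\ast = T_1$. Because both $u_{\alpha\bar\beta}$ and $v_{\alpha\bar\beta}$ are positive definite Hermitian matrices and $u_{\alpha\bar\beta}\leq v_{\alpha\bar\beta}$ at $(z^\ast,t^\ast)$, one has $\log\det u_{\alpha\bar\beta}\leq \log\det v_{\alpha\bar\beta}$ there.

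Subtracting the two hypotheses of the corollary at $(z^\ast,t^\ast)$,
\[
\partial_t H_\epsilon = (\dot u-\dot v)-\epsilon \leq (\log\det u_{\alpha\bar\beta}-\log\det v_{\alpha\bar\beta})-A(u-v)-\epsilon \leq -A(u-v)-\epsilon.
\]
But at $(z^\ast,t^\ast)$ we have $(u-v) = H_\epsilon+\epsilon t^\ast+M>0$, and since $A\geq 0$ this gives $\partial_t H_\epsilon(z^\ast,t^\ast)\leq -\epsilon < 0$, contradicting $\partial_t H_\epsilon(z^\ast,t^\ast)\geq 0$. Hence $\sup H_\epsilon\leq 0$, i.e.\ $\sup(u-v)\leq M+\epsilon T$, and letting $\epsilon\searrow 0$ finishes the proof. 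Equivalently, one may phrase the interior inequality in the form $(\partial_t-\Delta_t)H\leq -AH$ with $\omega_t = dd^c v(\cdot,t)$ and invoke Theorem \ref{max prin}, but the perturbation must still be inserted to handle the $-AH$ term and the non-attainment of the supremum.

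The only technical nuisance I anticipate is precisely that the supremum in the statement is taken over the time-open set $\Omega\times(0,T)$, where it need not be attained, and the $-AH$ term has an uncontrolled sign; both issues are absorbed by the $-\epsilon t$ perturbation, which turns any would-be interior maximum of $H$ into one where the time derivative must be strictly negative.
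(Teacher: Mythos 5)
Your proof is correct. The paper gives no proof of this corollary; the intent is that it follow from Theorem \ref{max prin}, which the paper states and attributes to \cite{BG13} and \cite{IS13}. Your argument instead re-derives the relevant maximum principle directly in this setting via a $-\epsilon t$ perturbation. This is actually the cleaner route here: Theorem \ref{max prin} as stated has no zeroth-order term, so extracting the corollary for $A>0$ from it is not entirely immediate, whereas your perturbation simultaneously handles the $-A(u-v)$ term (whose sign is only controlled where $u-v>0$, which is exactly where your contradiction lives) and the possible non-attainment of the supremum over the time-open cylinder, by working on $\bar\Omega\times[0,T_1]$ for $T_1<T$. The two points you leave implicit are both standard and correctly invoked: at a spatial maximum the full real Hessian of $H_\epsilon$ is $\le 0$, hence so is the complex Hessian $(H_\epsilon)_{\alpha\bar\beta}$; and for positive definite Hermitian matrices $0<u_{\alpha\bar\beta}\leq v_{\alpha\bar\beta}$ implies $\det u_{\alpha\bar\beta}\leq\det v_{\alpha\bar\beta}$ (conjugate by $v_{\alpha\bar\beta}^{-1/2}$). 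The argument is valid and self-contained.
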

  \begin{Cor}\label{compa lap}
   Let $\Omega$ be a bounded domain of $\C^n$ and $T>0$. We denote
  by  $L$ the operator on $C^{\infty}(\Omega\times (0,T))$ given by
  $$
  L(f)=\dfrac{\partial f}{\partial t}-\sum a_{\alpha\bar{\beta}}
  \dfrac{\partial^2f}{\partial z_{\alpha}\partial \bar{z}_{\beta}}-b.f,
  $$
  where $a_{\alpha\bar{\beta}}, b\in C(\Omega\times (0,T))$, $(a_{\alpha\bar{\beta}}(z,t))$
  are positive definite Hermitian matrices and $b(z,t)<0$.\\
  Assume that  $\phi\in C^{\infty}(\Omega\times (0,T))\cap C(\bar{\Omega}\times [0,T))$ 
  satisfies\\
  $$L(\phi)\leq  0.$$
  Then $\phi\leq max(0,\sup\limits_{\partial_P(\Omega\times (0,T))}\phi).$
  \end{Cor}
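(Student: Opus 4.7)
I would prove Corollary \ref{compa lap} by the classical perturbed parabolic maximum principle argument. The idea is: at a first interior maximum of $\phi$ in space-time where $\phi>0$, the time derivative is nonnegative, the complex Hessian is negative semidefinite (so its trace against the positive definite Hermitian matrix $(a_{\alpha\bar\beta})$ is nonpositive), and $-b\phi>0$ since $b<0$; hence $L(\phi)>0$, contradicting $L(\phi)\le 0$. The two standard subtleties are (i) the supremum over $\bar\Omega\times[0,T)$ may fail to be attained, since $\phi$ is only continuous up to $t<T$, and (ii) the maximum principle as stated needs strict inequality in $L$ to rule out boundary cases in time, so a small perturbation is required.

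First I would fix $0<\delta<T$ and work on the compact set $\bar\Omega\times[0,T-\delta]$, on which $\phi$ is continuous and hence attains its supremum. For $\epsilon>0$, set $\phi_\epsilon:=\phi-\epsilon t$. A direct computation gives
\begin{equation*}
L(\phi_\epsilon)=L(\phi)-\epsilon+b\epsilon t\le -\epsilon+b\epsilon t\le -\epsilon<0,
\end{equation*}
since $L(\phi)\le 0$ and $b<0$, $t\ge 0$. Thus $\phi_\epsilon$ satisfies a strict differential inequality.

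Next, suppose for contradiction that $\phi_\epsilon$ attains its maximum $M>0$ on $\bar\Omega\times[0,T-\delta]$ at some point $(z_0,t_0)$ with $z_0\in\Omega$ and $t_0\in(0,T-\delta]$, i.e.\ a point not in $\partial_P(\Omega\times(0,T))$. At such a point $\partial_t\phi_\epsilon(z_0,t_0)\ge 0$ (equal to $0$ if $t_0<T-\delta$, nonnegative if $t_0=T-\delta$), the Hermitian matrix $(\phi_{\epsilon,\alpha\bar\beta}(z_0,t_0))$ is negative semidefinite because $\phi_\epsilon(\cdot,t_0)$ has a local maximum at $z_0$, and hence $\sum a_{\alpha\bar\beta}(z_0,t_0)\phi_{\epsilon,\alpha\bar\beta}(z_0,t_0)\le 0$ by positivity of $(a_{\alpha\bar\beta})$. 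Combined with $-b(z_0,t_0)\phi_\epsilon(z_0,t_0)>0$, this yields $L(\phi_\epsilon)(z_0,t_0)\ge 0$, contradicting the strict inequality above. Therefore either $M\le 0$ or the maximum of $\phi_\epsilon$ is attained on $\partial_P(\Omega\times(0,T))$, giving
\begin{equation*}
\sup_{\bar\Omega\times[0,T-\delta]}\phi_\epsilon\le\max\Bigl(0,\sup_{\partial_P(\Omega\times(0,T))}\phi_\epsilon\Bigr)\le\max\Bigl(0,\sup_{\partial_P(\Omega\times(0,T))}\phi\Bigr).
\end{equation*}
Letting first $\epsilon\to 0^+$ and then $\delta\to 0^+$ concludes the proof.

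The argument is essentially routine; the only delicate point is to ensure the maximum is genuinely attained so that the first/second derivative tests apply, which is handled by restricting to $[0,T-\delta]$ and introducing the $-\epsilon t$ perturbation to convert $L(\phi)\le 0$ into a strict inequality $L(\phi_\epsilon)<0$ that rules out the degenerate boundary-in-time case $t_0=T-\delta$.
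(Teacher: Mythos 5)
Your proof is correct. The paper does not actually prove Corollary \ref{compa lap}; it presents it (together with Theorem \ref{max prin}) as a standard fact with proofs deferred to \cite{BG13} and \cite{IS13}, and your argument---the $\phi_\epsilon = \phi - \epsilon t$ perturbation to make $L(\phi_\epsilon) < 0$ strict, restriction to the compact slab $\bar\Omega\times[0,T-\delta]$ to ensure a maximum is attained, and the first/second derivative test combined with $-b\phi_\epsilon>0$ at a positive interior maximum---is exactly the classical parabolic maximum principle proof found in those references.
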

 \subsection{Weak solution of Parabolic Monge-Amp\`ere equation}\label{sec weak}
 \begin{flushleft}
We recall some properties of weak solution of \eqref{KRF}, which were proved in \cite{Do15a} and \cite{Do15b}.
 \end{flushleft}
 \begin{Prop}\label{first results}
 Let $A\geq 0,T>0$ and let $\Omega$ be a bounded smooth strictly pseudoconvex domain of $\C^n$.  Assume that $u_0$ 
 is a plurisubharmonic function in a neighbourhood of $\bar{\Omega}$
   and $\varphi, f$ are smooth in $\bar{\Omega}\times [0,T]$.
    Then there exists a unique weak solution $u$  of \eqref{KRF}. Moreover, 
   \begin{itemize}
   \item[(i)] $u|_{\partial\Omega\times [0,T)}=\varphi$ ; 
   $u(z,t)\stackrel{L^1}{\longrightarrow}u_0$ as $t\searrow 0$.
   \item[(ii)] If Lelong numbers $\nu (u,a)=0$ for every $a\in\Omega$ 
   then $u\in C^{\infty}(\bar{\Omega}\times (0,T))$ and $u$ satisfies \eqref{KRF} in $\bar{\Omega}\times (0,T)$ 
   in the classical sense.
   \item[(ii)] If there exist $l\in\N, a_j\in\Omega, N_j\geq n_j\geq 0$ such that
   $$\sum\limits_{j=1}^l n_j\log |z-a_j|+C_0\geq u_0\geq \sum\limits_{j=1}^l N_j\log |z-a_j|-C_0,$$ 
       then $u$ satisfies
       \begin{itemize}
        \item[(a)] $u\in C^{\infty}(Q)$, where $Q=(\bar{\Omega}\times (0,T))\setminus
           (\cup(\{a_j\}\times (0,\epsilon_A(N_j)])$.
           \item[(b)] $u=-\infty$ on $\cup(\{a_j\}\times [0,\min\{T,\epsilon_A(n_j)\}))$.
           \item[(c)] $\dot{u}=\log\det u_{\alpha\bar{\beta}}-Au+f(z,t)$ in $Q$.
       \end{itemize}
   \end{itemize}
 \end{Prop}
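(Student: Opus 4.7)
The plan is to construct the weak solution as a monotone limit of classical Hou–Li solutions to approximated Cauchy problems and then read off the remaining properties from this construction. First I would approximate $u_0$ from above by a decreasing sequence of smooth strictly plurisubharmonic functions $u_{0,m}$ defined in a neighbourhood of $\bar{\Omega}$, with $u_{0,m}|_{\partial\Omega} = \varphi(\cdot,0)$ and satisfying the Hou–Li compatibility condition at $\partial\Omega\times\{0\}$; such approximations can be built by mollifying $u_0$ and gluing with $\varphi(\cdot,0)$ via a large multiple of a smooth strictly plurisubharmonic defining function $\rho$. Theorem \ref{houli}, applied with the non-increasing source term $(z,t,u)\mapsto f(z,t)-Au$, then produces smooth classical solutions $u_m$; a global subsolution of the form $M\rho+\varphi+Ct$ with $M,C\gg 1$ supplies both the subsolution hypothesis and a uniform lower bound. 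The comparison principle (Corollary \ref{compa log}) yields $u_m\searrow u$, and this $u$ is the desired weak solution.

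For uniqueness, given two weak solutions realised as limits $u_m\searrow u$ and $v_k\searrow v$, I would apply Corollary \ref{compa log} to the smooth $u_m$ and $v_k$ (after a small time shift to restore strict inequality in the Monge–Ampère term) to get $\sup(u_m-v_k)\leq\sup_{\partial_P}(u_m-v_k)$; the parabolic boundary contributions vanish in the limit, forcing $u=v$. Item (i) follows from the same approximation: the subsolution $M\rho+\varphi+Ct$ together with an analogous supersolution pin $u_m$ to $\varphi$ on $\partial\Omega\times[0,T)$, while $u_m(\cdot,0)=u_{0,m}\searrow u_0$ together with monotone convergence and standard plurisubharmonic convergence tools gives the $L^1$-convergence to $u_0$ as $t\searrow 0$.

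For item (ii), the assumption $\nu(u_0,a)=0$ for every $a\in\Omega$ allows one to invoke Skoda's integrability theorem: $e^{-2Cu_0}\in L^1_{\mathrm{loc}}$ for every $C>0$. This supplies a uniform $L^\infty$ bound on $u_m$ on compacta (the upper bound from the barrier, the lower bound via integrability of $\log\det (u_m)_{\alpha\bar\beta}$ against a test function), from which the standard parabolic Monge–Ampère $C^2$ estimates of Hou–Li type, Evans–Krylov, and interior Schauder theory yield uniform $C^k$ bounds on every compact subset of $\bar{\Omega}\times(0,T)$; the limit $u$ therefore lies in $C^\infty(\bar\Omega\times(0,T))$ and satisfies \eqref{KRF} classically. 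For item (iii), the crux is to construct sub- and supersolutions of the form $\lambda(t)\log|z-a_j|+h(z,t)$ with $h$ smooth. Plugging such an ansatz into \eqref{KRF} leads heuristically to the ODE $\dot\lambda=-A\lambda-2n$, whose integration shows that the singular coefficient drops to zero exactly at time $\epsilon_A(\lambda(0))$; this is precisely the origin of formula \eqref{epsilonA.eq}. Comparison with these barriers and with the approximants $u_m$ then localises the singular set and the smooth locus of $u$.

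The main obstacle lies in item (iii): one must turn the formal barrier construction into genuine sub- and supersolutions on all of $\bar\Omega$ that match the boundary data $\varphi$ and are admissible for the comparison principle. This typically requires a gluing with a smooth background solution near $\partial\Omega$ and a careful choice of the smooth remainder $h(z,t)$ so that its contribution to the Monge–Ampère determinant does not shift the critical time $\epsilon_A(\lambda(0))$. A secondary difficulty in item (ii) is uniform regularity up to $\partial\Omega$: here one exploits the smoothness of $\varphi$ and the two-sided barrier $M\rho+\varphi+Ct$ to get uniform $C^{2;1}$ control near $\partial\Omega\times[0,T)$, which then bootstraps via boundary Schauder theory.
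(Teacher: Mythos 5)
The paper does not prove Proposition~\ref{first results} here; it explicitly recalls it as established in \cite{Do15a} and \cite{Do15b}, so there is no in-text proof to compare against. What can be said is that your overall blueprint (approximation by Hou--Li classical solutions, comparison for uniqueness, Demailly approximation plus pluripotential estimates for interior regularity, barrier analysis for the singular set) is consistent with the toolkit the paper uses for its own Theorem~\ref{main}. Two steps, however, are vague enough to count as genuine gaps.

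For item~(ii), the assertion that the lower $C^0$ bound follows from ``integrability of $\log\det(u_m)_{\alpha\bar\beta}$ against a test function'' does not work: the Monge--Amp\`ere measure does not control $u_m$ from below by a simple duality pairing. What the machinery of this paper actually relies on is the combination of Lemma~\ref{dotu.lem} (giving $(dd^cu_m)^n\leq Ce^{(u_m-u_0)/t}\,dV$), Skoda/Demailly-type integrability of $e^{2l(v_l-u_0)}$ (Proposition~\ref{Prop fini sum app}(a)), and the De~Giorgi--type iteration through capacity and volume estimates encoded in Lemma~\ref{de Giorgi} and Lemma~\ref{lemestiu}. You should replace the test-function heuristic by this capacity argument; without it the lower bound and hence the $C^2$ estimates are unsupported.

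For item~(iii), the ansatz $v=\lambda(t)\log|z-a_j|+h(z,t)$ together with the ODE $\dot\lambda=-A\lambda-2n$ is asserted but not derived, and the naive plug-in does not produce it. If you take $v=\lambda(t)\log|z|+\mu|z|^2$, the complex Hessian of $\log|z|$ has eigenvalue $0$ in the radial direction, so $\det v_{\alpha\bar\beta}=\mu\bigl(\tfrac{\lambda}{2|z|^2}+\mu\bigr)^{n-1}$, whose logarithm behaves like $-2(n-1)\log|z|+O(1)$ near $z=0$; the resulting critical ODE is $\dot\lambda=-A\lambda-2(n-1)$, not $-A\lambda-2n$. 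Moreover, without the $\mu|z|^2$ regularisation the determinant vanishes identically, so the barrier is not admissible for Corollary~\ref{compa log}. The sharp constants in $\epsilon_A$ and the two-sided enclosure in Theorem~\ref{main}(i)(a) in fact come from two different mechanisms---the lower bound from a supersolution estimate of the flavour of Proposition~\ref{singular.prop.weak sec}, the upper bound from the Demailly-approximation/capacity estimate with exponent $2l$---and there is an unavoidable gap of a factor $n$ between them. Your write-up should separate these two directions rather than attribute both to a single ODE for $\lambda$.

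Finally, a small point on uniqueness: Corollary~\ref{compa log} compares two smooth sub/supersolutions with the \emph{same} right-hand side and boundary data; applying it to $u_m$ and $v_k$ coming from two unrelated approximating schemes requires controlling the mismatch in their initial and boundary values before passing to the limit (this is essentially what Proposition~\ref{weaker condition.lem.sec weak} packages), and the ``small time shift to restore strictness'' is not needed since Corollary~\ref{compa log} already allows equality.
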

 \begin{Prop} \label{weaker condition.lem.sec weak}
  Assume that there exists $u_m\in C^{\infty}(\bar{\Omega}\times [0,T))$  satisfying
  \begin{equation}\label{KRF_weak.sec weak}
     \begin{cases}
     \begin{array}{ll}
     u_m(.,t)\in SPSH(\Omega)&\forall t\in [0,T),\\
      u_m(z,t)+2^{-m}\geq u_{m+1}(z,t)&\forall (z,t)\in \bar{\Omega}\times [0,T),\\
     \dot{u}_m=\log\det (u_m)_{\alpha\bar{\beta}}-Au_m+f(z,t)\;\;&\forall (z,t)\in\Omega\times (0,T),\\
     u_m(z,t)\longrightarrow\varphi(z,t)&\forall (z,t)\in\partial\Omega\times [0,T),\\
     u_m(z,0)\longrightarrow u_0(z)&\forall z\in\bar{\Omega}.
     \end{array}
     \end{cases}
     \end{equation}
     Then $u=\lim u_m$ is a weak solution of \eqref{KRF}.
 \end{Prop}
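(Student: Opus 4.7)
The plan is to modify the given almost-decreasing sequence $\{u_m\}$ by a purely time-dependent shift so as to obtain a genuinely monotonically decreasing sequence $\{v_m\}$ that still satisfies the parabolic Monge-Amp\`ere equation and shares the same pointwise limit. Once such $v_m$ is produced, the hypotheses of the weak-solution definition are met verbatim, so no further work is needed.

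First I would determine the correct shape of the shift. If $v_m(z,t) = u_m(z,t) + c_m(t)$ and we want $\dot v_m = \log\det (v_m)_{\alpha\bar\beta} - A v_m + f$, then substituting and using $\dot u_m = \log\det(u_m)_{\alpha\bar\beta} - A u_m + f$ together with $(v_m)_{\alpha\bar\beta} = (u_m)_{\alpha\bar\beta}$ forces $c_m'(t) = -A\,c_m(t)$. Thus $c_m$ must be a scalar multiple of $e^{-At}$; for convenience I will write $c_m(t) = C_m e^{A(T-t)}$.

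Next I would choose the constants $C_m$ to secure monotonicity. The condition $v_m(z,t) \geq v_{m+1}(z,t)$ on $\bar\Omega \times [0,T)$ reads
$$u_m - u_{m+1} + (C_m - C_{m+1})\, e^{A(T-t)} \geq 0.$$
Since the hypothesis gives $u_m - u_{m+1} \geq -2^{-m}$ and $e^{A(T-t)} \geq 1$ on $[0,T]$, it suffices to impose $C_m - C_{m+1} \geq 2^{-m}$; the simplest choice is $C_m = 2^{1-m}$, yielding
$$v_m(z,t) \;:=\; u_m(z,t) + 2^{1-m} e^{A(T-t)}.$$

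It then remains to check each clause of the definition of weak solution for $\{v_m\}$: smoothness of $v_m$ on $\bar\Omega \times [0,T)$; strict plurisubharmonicity of $v_m(\cdot,t)$ on $\Omega$, preserved because only a $z$-independent term has been added; the PDE, which holds by the very derivation of $c_m$; $v_m \searrow \varphi$ on $\partial\Omega \times [0,T)$ and $v_m \searrow u_0$ on $\bar\Omega \times \{0\}$, both following by combining the monotonicity just established with $2^{1-m} e^{A(T-t)} \to 0$ and the pointwise convergences given by hypothesis; and $u = \lim v_m = \lim u_m$, with upper semicontinuity coming for free as a decreasing limit of continuous functions. There is no real obstacle; the only subtle point is that the naive shift $v_m = u_m + 2^{1-m}$ does \emph{not} work, because the $-Au$ term makes the equation drift by $A \cdot 2^{1-m}$, and it is precisely the factor $e^{A(T-t)}$ that cancels this drift on $[0,T]$ while keeping the correction positive and tending to zero.
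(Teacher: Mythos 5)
Your proof is correct, and the time-dependent shift $c_m(t)=2^{1-m}e^{A(T-t)}$ is exactly the right device: it solves $c_m'=-Ac_m$ so the shifted functions still solve the same parabolic Monge--Amp\`ere equation, it is bounded below by $2^{1-m}$ on $[0,T)$ so the ``almost-decreasing'' defect $2^{-m}$ is absorbed, and it tends to zero uniformly so the limit is unchanged and the boundary/initial convergences become genuinely decreasing. The paper itself does not print a proof of this proposition (it is recalled from \cite{Do15a,Do15b}), but this is the standard reduction from an almost-decreasing solution sequence to a decreasing one, and your verification of each clause of the weak-solution definition is complete. One small remark you could add for completeness: the lower bound $e^{A(T-t)}\ge 1$ that you use requires $A\ge 0$ and $t\le T$, both of which hold here; and when $A=0$ the shift degenerates to the naive constant $2^{1-m}$, consistent with your observation about why the constant shift fails only when $A>0$.
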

  \begin{Prop}\label{singular.prop.weak sec}
  If there is $a\in\Omega$ such that $\nu_{u_0}(a)>0$, then the weak solution $u$ of \eqref{KRF} satisfies $u(a,t)=-\infty$ for $t\in [0,\epsilon_A(\nu_{u_0}(a)))$.
  \end{Prop}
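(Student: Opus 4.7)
The plan is to reduce Proposition \ref{singular.prop.weak sec} to the final case of Proposition \ref{first results} by constructing a plurisubharmonic majorant of $u_0$ sharing its boundary datum and carrying a single logarithmic pole at $a$. Set $\nu:=\nu_{u_0}(a)>0$. Since $\epsilon_A$ is continuous and strictly increasing, it suffices to prove that $u(a,t)=-\infty$ for every $t<\epsilon_A(\nu')$ with $0<\nu'<\nu$ arbitrary, and then let $\nu'\nearrow\nu$. By the definition of the Lelong number, for any such $\nu'$ there exist $r>0$ and $C_0>0$ with $u_0(z)\leq\nu'\log|z-a|+C_0$ on the ball $B(a,r)$.

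I first construct the majorant. Since $\Omega$ is bounded smooth strictly pseudoconvex and $\varphi(\cdot,0)-\nu'\log|z-a|$ is continuous on $\partial\Omega$ (as $a\in\Omega$), the Perron--Bremermann envelope $\hat H$ of this boundary datum is a continuous plurisubharmonic function on $\bar\Omega$ attaining the prescribed boundary values. I set
$$\tilde u_0(z):=\max\bigl\{\nu'\log|z-a|+\hat H(z),\,u_0(z)\bigr\}.$$
Then $\tilde u_0$ is plurisubharmonic (as a maximum of two plurisubharmonic functions), it equals $\varphi(\cdot,0)$ on $\partial\Omega$, and $\tilde u_0\geq u_0$ by construction. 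Because $(\nu-\nu')\log|z-a|\to-\infty$ as $z\to a$, the first argument of the maximum dominates $u_0$ in a neighbourhood of $a$, so $\tilde u_0=\nu'\log|z-a|+\hat H$ there; in particular $\tilde u_0$ carries a single logarithmic singularity of Lelong number $\nu'$ at $a$. A direct estimation, splitting $\bar\Omega$ into a small neighbourhood of $a$ (where the first term of the max dominates) and its complement (where $\log|z-a|$ is bounded), yields $C_1>0$ with
$$\nu'\log|z-a|-C_1\;\leq\;\tilde u_0(z)\;\leq\;\nu'\log|z-a|+C_1\qquad\text{on }\Omega.$$

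The majorant $\tilde u_0$ thus satisfies the hypothesis of the final case of Proposition \ref{first results} with $l=1$, $a_1=a$, $n_1=N_1=\nu'$, so the weak solution $\tilde u$ of \eqref{KRF} with initial datum $\tilde u_0$ and boundary datum $\varphi$ satisfies $\tilde u(a,t)=-\infty$ for every $t\in[0,\min\{T,\epsilon_A(\nu')\})$. To conclude, I invoke monotonicity of weak solutions in the initial datum: $u_0\leq\tilde u_0$ implies $u\leq\tilde u$. This is obtained by selecting smooth approximants $u_{0,m}\searrow u_0$ and $\tilde u_{0,m}\searrow\tilde u_0$ as in \eqref{KRF_weak}, then replacing $\tilde u_{0,m}$ by a smooth regularised maximum of itself with $u_{0,m}$ so that $\tilde u_{0,m}\geq u_{0,m}$ pointwise while the monotone convergence to $\tilde u_0$ is preserved. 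The associated Hou--Li solutions $u_m,\tilde u_m$ then share boundary values and satisfy $u_m(\cdot,0)\leq\tilde u_m(\cdot,0)$, so the comparison principle (Corollary \ref{compa log}) gives $u_m\leq\tilde u_m$; passing to the limit yields $u\leq\tilde u$, hence $u(a,t)\leq\tilde u(a,t)=-\infty$ for $t<\epsilon_A(\nu')$.

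The delicate step is the majorant construction: $\tilde u_0$ must simultaneously dominate $u_0$ (whose polar set may be complicated elsewhere in $\Omega$), exhibit the Lelong profile $\nu'$ at $a$, and match the prescribed boundary datum. The Perron--Bremermann envelope secures the boundary matching, the maximum with $u_0$ enforces global dominance, and the strict inequality $\nu'<\nu$ ensures that the singular profile at $a$ is inherited from the first argument of the maximum rather than from $u_0$ itself, so that the global two-sided logarithmic bounds required to invoke Proposition \ref{first results} do indeed hold.
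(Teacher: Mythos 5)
The paper does not actually prove Proposition~\ref{singular.prop.weak sec}: Section~3.3 explicitly recalls it (together with Propositions~\ref{first results} and~\ref{weaker condition.lem.sec weak}) from the earlier works \cite{Do15a,Do15b}, so there is no in-paper proof to compare against. Your argument --- build a plurisubharmonic majorant $\tilde u_0\geq u_0$ with a single logarithmic pole of prescribed weight at $a$, invoke the last case of Proposition~\ref{first results} for $\tilde u_0$, and push the pole back to $u$ by monotonicity of the weak solution in the initial datum (approximate, regularised max, Corollary~\ref{compa log}, pass to the limit, then let $\nu'\nearrow\nu$) --- is a legitimate and natural reduction, and the monotonicity step is sketched correctly. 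Two points, however, need tightening before the proof is airtight.

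First, the justification ``because $(\nu-\nu')\log|z-a|\to-\infty$, the first argument of the max dominates $u_0$ near $a$'' silently assumes $u_0\leq\nu\log|z-a|+O(1)$, which the Lelong number $\nu(u_0,a)=\nu$ does \emph{not} give: it only gives $u_0\leq\nu''\log|z-a|+C_{\nu''}$ near $a$ for every $\nu''<\nu$. The cure is to interpolate: fix $\nu''\in(\nu',\nu)$, note that $\big(\nu'\log|z-a|+\hat H\big)-u_0\geq(\nu'-\nu'')\log|z-a|-C\to+\infty$ as $z\to a$, and conclude. This yields both the claim that $\tilde u_0$ agrees with the first argument near $a$ and the global two-sided bound $\nu'\log|z-a|-C_1\leq\tilde u_0\leq\nu'\log|z-a|+C_1$.

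Second, $\hat H$ is defined only on $\bar\Omega$, so $\tilde u_0$ as you write it is plurisubharmonic on $\bar\Omega$ but not on a neighbourhood of $\bar\Omega$, whereas Proposition~\ref{first results} (and the weak-solution framework via approximation by smooth strictly psh functions on $\bar\Omega$) is formulated for initial data psh on a neighbourhood of $\bar\Omega$. This is easy to repair: replace $\hat H$ by $M\rho+h$, where $h$ is any smooth function on a neighbourhood of $\bar\Omega$ extending $\varphi(\cdot,0)-\nu'\log|\cdot-a|$ from $\partial\Omega$ and $M\gg1$ makes $M\rho+h$ (hence $\nu'\log|z-a|+M\rho+h$) plurisubharmonic there; the boundary matching $\tilde u_0|_{\partial\Omega}=\varphi(\cdot,0)$ and the two-sided log bound are then preserved. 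With these two adjustments the argument goes through.
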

\section{Proof of Theorem \ref{main}}
\subsection{Some technique lemmas}
We present some lemmas, which will be used to prove the main theorem. The following two lemmas were proved in 
\cite{Do15b}.
 \begin{Lem}\label{utminusu0.lem.weak sec}
 Suppose that $\psi,g\in C^{\infty}(\bar{\Omega}\times [0,T])$. 
 Assume that $v\in C^{\infty}(\bar{\Omega}\times [0,T))$ satisfies
 \begin{equation}
    \begin{cases}
    \begin{array}{ll}
    v(.,t)\in SPSH(\bar{\Omega}),\\
    \dot{v}=\log\det (v_{\alpha\bar{\beta}})-Av+g(z,t)\;\;\;&\mbox{on}\;\Omega\times (0,T),\\
    v=\psi&\mbox{on}\;\partial\Omega\times [0,T).\\
    \end{array}
    \end{cases}
    \end{equation}
    Then $$v(z,t)-v(z,0)\geq -C(t),$$ for any $(z,t)\in \bar{\Omega}\times [0,T)$. Here $C(t)$ is defined by
    $$C(t)=\inf\limits_{1>\epsilon>0}((-n\log\epsilon+A\sup |\psi|+
    \sup |g|)t-\epsilon\inf\rho)+
    \sup\limits_{t'\in [0,t]}\sup\limits_{z\in\partial\Omega}|\psi (z,t')-\psi (z,0)|,$$
    where $\rho\in C^{\infty}(\bar{\Omega})$ such that $dd^c\rho\geq dd^c|z|^2$ and $\rho|_{\partial\Omega}=0$.
     \end{Lem}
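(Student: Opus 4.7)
The plan is to construct an explicit smooth subsolution $w$ lying below $v$ on the parabolic boundary and then invoke the comparison principle (Corollary \ref{compa log}) to propagate the inequality $w \leq v$ into the interior. Since the desired lower bound involves an infimum over an auxiliary parameter $\epsilon \in (0,1)$, I will build a one-parameter family $w_\epsilon$ of barriers, each linear in $t$, and optimize in $\epsilon$ at the end.

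Fix $t_0 \in (0,T)$ and set
\[
w(z,t) = v(z,0) + \epsilon \rho(z) - \gamma(\epsilon)\, t - c_\psi(t_0),
\]
where $\gamma(\epsilon) := -n\log\epsilon + A\sup|\psi| + \sup|g|$ and $c_\psi(t_0) := \sup_{t' \in [0,t_0]} \sup_{\partial\Omega}|\psi(\cdot,t') - \psi(\cdot,0)|$. Since $v(\cdot,0)$ is smooth strictly plurisubharmonic and $\rho_{\alpha\bar\beta} \geq (\mathrm{Id})_{\alpha\bar\beta}$, we get $w_{\alpha\bar\beta} \geq \epsilon\, \mathrm{Id}$, so $\log\det w_{\alpha\bar\beta} \geq n\log\epsilon$; in particular $w(\cdot,t) \in SPSH(\Omega)$.

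The checks are then routine: on $\partial\Omega \times [0,t_0]$ one uses $\rho = 0$ together with $\psi(z,0)-\psi(z,t) \leq c_\psi(t_0)$ and $\gamma(\epsilon)\geq 0$ (for small $\epsilon$) to obtain $w \leq \psi = v$; at $t=0$ one has $w(z,0) \leq v(z,0)$ from $\rho \leq 0$ and $c_\psi \geq 0$. For the subsolution inequality $\dot w \leq \log\det w_{\alpha\bar\beta} - Aw + g$, note $\dot w = -\gamma(\epsilon)$, while the right-hand side is at least $n\log\epsilon - A\sup|\psi| - \sup|g|$ once one bounds $v(\cdot,0) \leq \sup|\psi|$ via the maximum principle for plurisubharmonic functions and uses $-A\epsilon\rho \geq 0$, $A\gamma(\epsilon)t \geq 0$, $Ac_\psi \geq 0$; the inequality holds with equality by the definition of $\gamma(\epsilon)$.

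Applying Corollary \ref{compa log} on $\bar\Omega \times [0,t_0]$ yields $w \leq v$. Evaluating at $t=t_0$ and bounding $\rho(z) \geq \inf \rho$ gives
\[
v(z,t_0) \geq v(z,0) - \bigl((-n\log\epsilon + A\sup|\psi| + \sup|g|)t_0 - \epsilon\inf\rho\bigr) - c_\psi(t_0),
\]
and taking the infimum over $\epsilon \in (0,1)$ produces exactly $v(z,t_0) \geq v(z,0) - C(t_0)$; since $t_0$ was arbitrary, the lemma follows. The only nontrivial point is designing $\gamma(\epsilon)$ so the $\log\det$ lower bound $n\log\epsilon$ (which degenerates as $\epsilon \to 0^+$) is exactly balanced by the time slope; everything else is bookkeeping of signs, the key ones being $\rho \leq 0$ in $\Omega$ and $v(\cdot,0) \leq \sup|\psi|$. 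I expect no real obstacle — the whole argument is a carefully calibrated barrier construction.
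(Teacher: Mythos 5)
Your barrier construction is correct, and the details check out cleanly: $w_{\alpha\bar\beta}\geq\epsilon\,\mathrm{Id}$ yields $\log\det w_{\alpha\bar\beta}\geq n\log\epsilon$; on $\partial\Omega\times[0,t_0]$ the term $c_\psi(t_0)$ absorbs $\psi(z,0)-\psi(z,t)$ and $-\gamma(\epsilon)t\leq 0$ finishes it; at $t=0$ the terms $\epsilon\rho\leq 0$ and $-c_\psi\leq 0$ give $w\leq v(\cdot,0)$; and for the differential inequality you correctly bound $v(\cdot,0)\leq\sup|\psi|$ by the maximum principle for subharmonic functions and observe that the remaining terms in $-Aw$ all have the right sign, so $\log\det w_{\alpha\bar\beta}-Aw+g\geq n\log\epsilon-A\sup|\psi|-\sup|g|=-\gamma(\epsilon)=\dot w$. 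Corollary~\ref{compa log} then gives $w\leq v$, and minimizing over $\epsilon\in(0,1)$ reproduces $C(t)$ exactly. Note that the paper itself does not prove this lemma here but cites \cite{Do15b}; the shape of the constant $C(t)$ (an infimum over $\epsilon$ of a term $(-n\log\epsilon+\dots)t-\epsilon\inf\rho$) is clearly reverse-engineered from precisely the barrier you built, so your argument is almost certainly the intended one. The only cosmetic point is that $\gamma(\epsilon)>0$ holds for every $\epsilon\in(0,1)$, not merely ``small $\epsilon$'', since $-n\log\epsilon>0$ there.
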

 \begin{Lem}\label{dotu.lem}
  Assume that $u\in C^{\infty}(\bar{\Omega}\times [0,T))$ satisfies
  \begin{equation}
   \begin{cases}
   \dot{u}=\log\det (u_{\alpha\bar{\beta}})+f(z,t)\;\;\mbox{on}\;\; \Omega\times (0,T),\\
   u=\varphi\;\;\;\mbox{on}\;\; \partial\Omega\times [0,T).
   \end{cases}
   \end{equation}
   Then 
  $$\dfrac{u(z,t)-\sup u_0}{t}-B\leq \dot{u}(z,t)\leq \dfrac{u(z,t)-u_0(z)}{t}+B, \;\;\forall 
  (z,t)\in\bar{\Omega}\times [0,T),$$
  where $B= 2\sup |\dot{\varphi}|+T\sup |\dot{f}|+n$ and $u_0=u(., 0)$.
  \end{Lem}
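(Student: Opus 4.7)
My plan is to linearize the equation in $t$ and apply the parabolic maximum principle to two carefully chosen auxiliary functions. Let $L = \dfrac{\partial}{\partial t} - u^{\alpha\bar\beta}\dfrac{\partial^2}{\partial z_\alpha \partial\bar z_\beta}$, where $(u^{\alpha\bar\beta})$ is the inverse Hermitian matrix of $(u_{\alpha\bar\beta})$ (positive definite since, from the PDE, $\det u_{\alpha\bar\beta}>0$, so $u(\cdot,t)$ is strictly plurisubharmonic). Differentiating the equation in $t$ yields $\ddot u = u^{\alpha\bar\beta}\dot u_{\alpha\bar\beta} + \dot f$, i.e.\ $L(\dot u) = \dot f$, while $L(u) = \dot u - n$ since $u^{\alpha\bar\beta}u_{\alpha\bar\beta} = n$. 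Combining, for any smooth function $\psi(z)$,
$$L\bigl(t\dot u - u + \psi\bigr) = t\dot f + n - u^{\alpha\bar\beta}\psi_{\alpha\bar\beta}.$$

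For the upper bound I would take $\psi = u_0$ and set $H_+ = t\dot u - u + u_0 - Bt$; the inequality to prove is $H_+\leq 0$. Since $u_0 = u(\cdot,0)$ is a smooth limit of the strictly plurisubharmonic $u(\cdot,t)$, $u_0$ is plurisubharmonic on $\bar\Omega$, so $u^{\alpha\bar\beta}(u_0)_{\alpha\bar\beta}\geq 0$. Hence
$$L(H_+) \leq T\sup|\dot f| + n - B = -2\sup|\dot\varphi| \leq 0.$$
On the parabolic boundary: $H_+(z,0)=0$ identically, and on $\partial\Omega\times[0,T)$, using $u=\varphi$ and $u_0(z)=\varphi(z,0)$ for $z\in\partial\Omega$,
$$H_+ = t\dot\varphi(z,t) - \bigl[\varphi(z,t)-\varphi(z,0)\bigr] - Bt \leq 2t\sup|\dot\varphi| - Bt \leq 0.$$
The parabolic maximum principle (Theorem \ref{max prin} with $\omega_t$ the Kähler form of $u(\cdot,t)$) yields $H_+\leq 0$ throughout $\bar\Omega\times[0,T)$, which is exactly the upper bound after dividing by $t$.

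For the lower bound I would take $\psi \equiv \sup_{\bar\Omega}u_0$ (a constant), and set $H_- = t\dot u - u + \sup u_0 + Bt$. Because $\psi$ is constant, the Laplacian term disappears and
$$L(H_-) = t\dot f + n + B \geq -T\sup|\dot f| + n + B \geq 0.$$
At $t=0$, $H_- = \sup u_0 - u_0 \geq 0$; on $\partial\Omega\times[0,T)$,
$$H_- = t\dot\varphi + \bigl[\sup u_0 - \varphi(z,0)\bigr] + \bigl[\varphi(z,0) - \varphi(z,t)\bigr] + Bt \geq -2t\sup|\dot\varphi| + Bt \geq 0,$$
using $\sup u_0 \geq u_0(z) = \varphi(z,0)$ on $\partial\Omega$. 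The minimum principle (applied to $-H_-$) gives $H_-\geq 0$, which is the lower bound.

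The only delicate point is the upper bound: the term $-u^{\alpha\bar\beta}(u_0)_{\alpha\bar\beta}$ appearing in $L(H_+)$ is not controlled in absolute value, but its favorable sign (from plurisubharmonicity of $u_0$) is exactly what makes the argument close. The three summands of $B = 2\sup|\dot\varphi| + T\sup|\dot f| + n$ come respectively from controlling the boundary oscillation of $\varphi$, the forcing term $t\dot f$, and the trace identity $u^{\alpha\bar\beta}u_{\alpha\bar\beta} = n$.
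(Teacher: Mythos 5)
Your proof is correct and takes the standard route for such time-derivative estimates: linearize the flow, then apply the parabolic maximum principle (Theorem \ref{max prin}) to $t\dot u - u + \psi \mp Bt$ with $\psi = u_0$ for the upper bound (plurisubharmonicity of $u_0$ supplies the favorable sign of $-u^{\alpha\bar\beta}(u_0)_{\alpha\bar\beta}$) and $\psi = \sup u_0$ for the lower bound (constant, so the Hessian term vanishes). The present paper cites the proof to \cite{Do15b} without reproducing it, but your derivation recovers exactly the stated constant $B = 2\sup|\dot\varphi| + T\sup|\dot f| + n$, which is strong evidence that the argument there is essentially the same.
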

  We will also need the following elementary observation 
\begin{Lem}\label{de Giorgi}
Let $g:\R^+\rightarrow \R^+$ be a decreasing right-continuous function.  Assume that there exist $\alpha, B>1$
such that $g$ satisfies
  \begin{center}
  $tg(t+s)\leq B (g(s))^{\alpha}\;\;\forall t,s>0$.
  \end{center}
  Then $g(s)>0$ for all $s\geq s_{\infty}$, where 
  $$s_{\infty}=\dfrac{2Bg(0)^{\alpha-1}}{1-2^{-\alpha+1}}.$$
\end{Lem}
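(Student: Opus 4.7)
The plan is a De Giorgi-style iteration. I build an increasing sequence $s_0 = 0 < s_1 < s_2 < \cdots$ with limit $s_\infty$, apply the hypothesis at $s = s_k$ with $t = t_k := s_{k+1} - s_k > 0$ to obtain the recursion
\[
g(s_{k+1}) \leq \frac{B}{t_k}\, g(s_k)^{\alpha},
\]
and prove by induction that $g(s_k)$ decays geometrically to $0$. Decreasing monotonicity of $g$ then forces the vanishing to propagate past $s_\infty$.

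The concrete calibration is as follows. I try $t_k = c\,r^k$ with $c > 0$ and $r \in (0,1)$, so that $s_\infty = c/(1-r)$, and I aim for the ansatz $g(s_k) \leq 2^{-k} g(0)$. Inserting this into the recursion gives
\[
g(s_{k+1}) \leq \frac{B}{c}\, r^{-k}\, 2^{-k\alpha}\, g(0)^{\alpha};
\]
asking for this to be bounded by $2^{-(k+1)}g(0)$ uniformly in $k$ forces the $k$-dependent factors to cancel, i.e.\ $r\cdot 2^{\alpha-1} = 1$, hence $r = 2^{1-\alpha}$. The remaining constraint is then $c \geq 2B\,g(0)^{\alpha-1}$; taking equality yields
\[
s_\infty = \frac{c}{1-r} = \frac{2B\, g(0)^{\alpha-1}}{1 - 2^{1-\alpha}} = \frac{2B\, g(0)^{\alpha-1}}{1 - 2^{-\alpha+1}},
\]
which is exactly the constant in the statement. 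The induction itself is then routine: base case $g(s_0) = g(0)$, and the inductive step is precisely the computation above with the tuned values of $c$ and $r$.

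Finally, since $g$ is decreasing and $s_k \nearrow s_\infty$, we have $g(s) \leq g(s_k) \leq 2^{-k} g(0) \to 0$ for every $s \geq s_\infty$, which gives the desired conclusion (the values $g(s)$ are forced to vanish on $[s_\infty,\infty)$). Right-continuity actually plays no role in the argument---only monotonicity is used at the end. There is no real obstacle: the only genuinely subtle point is recognising that the geometric ratio $r = 2^{1-\alpha}$ is precisely the one for which the inductive step becomes $k$-independent, after which all the constants fall out to match the stated formula for $s_\infty$.
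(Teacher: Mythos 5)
Your argument is correct and is exactly the standard De Giorgi-type iteration used in \cite{EGZ09}, which the paper cites for this lemma without reproducing the proof; so there is no in-paper proof to compare against, but the route you take is the intended one. Two small remarks. First, the stated conclusion ``$g(s) > 0$ for all $s \ge s_\infty$'' is clearly a typo for ``$g(s) = 0$'', as both the way the lemma is used (to conclude $\lambda(U_{s_\infty}) = 0$ in the proof of Lemma~\ref{lemestiu}) and your own argument show. Second, the recursion $g(s_{k+1}) \le \tfrac{B}{t_k}\, g(s_k)^\alpha$ invokes the hypothesis with $s = s_k$, which requires $s_k > 0$; your choice $s_0 = 0$ therefore leaves the base step $k = 0$ technically outside the range of the hypothesis. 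This is easily repaired, and indeed without right-continuity as you assert: for $0 < \eta < t_0$, the hypothesis with $s = \eta$, $t = t_0 - \eta$ gives $(t_0 - \eta)\, g(t_0) \le B\, g(\eta)^\alpha \le B\, g(0)^\alpha$ by monotonicity, and letting $\eta \to 0^+$ yields $t_0\, g(s_1) \le B\, g(0)^\alpha$, which is the $k=0$ case of your estimate. From $k \ge 1$ onward the recursion applies directly since $s_k > 0$, and your calibration $r = 2^{1-\alpha}$, $c = 2B\, g(0)^{\alpha-1}$, $s_\infty = c/(1-r)$ reproduces the stated constant exactly. The final passage to $g(s) = 0$ for $s \ge s_\infty$ via monotonicity and $s_k \nearrow s_\infty$ is fine.
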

We refer the reader to \cite{EGZ09} for the proof of Lemma \ref{de Giorgi}. Using this lemma, we prove Lemma
\ref{lemestiu}.
\begin{Lem}\label{lemestiu}
Let $\Omega$ be a bounded open subset of $\C^n$. Let $u, u_0, \psi$ be plurisubharmonic functions in $\Omega$
 such that $u\in C^{\infty}(\bar{\Omega})$  and $\psi$ is bounded near $\partial\Omega$. Assume also that 
 \begin{center}
 $(dd^cu)^n\leq Be^{a_1(u-a_2u_0)}dV$,\\
 $\int_{\Omega}e^{b(\psi-u_0)}dV\leq B,$
 \end{center}
 where $a_1,a_2,b,B>0, b>a_1a_2$. Then there exists $C>0$ depending
  only on $a_1,a_2,b,B,n, \Omega$ and $\liminf\limits_{z\to\partial\Omega} (u-a_2\psi)$ such that
 \begin{center}
 $u\geq a_2\psi -C$ on $\Omega.$
 \end{center}
\end{Lem}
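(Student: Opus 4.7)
The plan is to establish this Ko{\l}odziej-type $L^{\infty}$ estimate by iterating a comparison-principle bound on the sublevel sets
\[
\Omega(s) := \{z\in\Omega: u(z)<a_2\psi(z)-s\},
\]
and concluding with the de Giorgi type Lemma~\ref{de Giorgi}. The boundary hypothesis $\liminf_{z\to\partial\Omega}(u-a_2\psi)>-\infty$ ensures $\Omega(s)\Subset\Omega$ for $s\geq s_0$, with $s_0$ depending only on that liminf and $\Omega$; everything subsequent therefore takes place inside a fixed compact subset of $\Omega$.

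The two core estimates are a comparison-principle step and a H\"older step. Fix a smooth strictly PSH function $\rho\in C^{\infty}(\bar\Omega)$ with $\rho\leq 0$ and $(dd^{c}\rho)^{n}\geq c_{n}\,dV$. Applying the Bedford--Taylor comparison principle to $u$ and the PSH perturbation $\varphi_{s,t}:=a_2\psi-s+t\rho/\|\rho\|_{\infty}$ over $\Omega(s)$ yields
\[
t^{n}\operatorname{Vol}(\Omega(s+t))\;\leq\;c\int_{\Omega(s)}(dd^{c}u)^{n}.
\]
On $\Omega(s)$ one has $u-a_2 u_0<a_2(\psi-u_0)-s$, whence $(dd^{c}u)^{n}\leq Be^{-a_1 s}e^{a_1 a_2(\psi-u_0)}\,dV$; H\"older's inequality with exponent $p=b/(a_1 a_2)>1$ together with the integrability hypothesis then bounds the resulting integral by $C_1\operatorname{Vol}(\Omega(s))^{\gamma}$, where $\gamma:=1-a_1 a_2/b\in(0,1)$. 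Combining these,
\[
t^{n}\operatorname{Vol}(\Omega(s+t))\;\leq\;C_2\, e^{-a_1 s}\operatorname{Vol}(\Omega(s))^{\gamma} \qquad (s\geq s_0,\;t>0).
\]

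To apply Lemma~\ref{de Giorgi}, which demands an exponent $\alpha>1$ while the naturally available $\gamma$ lies below $1$, one shifts from volume to capacity. The Alexander--Taylor estimate $\operatorname{Vol}(E)\leq C\exp(-c/\operatorname{cap}(E,\Omega)^{1/n})$ shows that, whenever $\operatorname{cap}(E,\Omega)$ is small, $\operatorname{Vol}(E)^{\gamma}$ is bounded above by $\operatorname{cap}(E,\Omega)^{1+\epsilon}$ for some $\epsilon>0$. Setting $g(s):=\operatorname{cap}(\Omega(s),\Omega)$ and choosing $s_1\geq s_0$ large enough that $g(s_1)$ falls into the ``small-cap'' regime (which one secures by first iterating the volume inequality above to drive $\operatorname{Vol}(\Omega(s))$ down to any desired level), one obtains an inequality of the form $tg(s+t)\leq B\,g(s)^{\alpha}$ with $\alpha=1+\epsilon>1$. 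Lemma~\ref{de Giorgi} then yields $g(s)=0$ for $s\geq s_{\infty}$. Since $\operatorname{cap}=0$ implies $\operatorname{Vol}=0$, a sub-mean-value argument for the PSH function $\psi$ combined with the continuity of $u$ upgrades this to $\Omega(s_{\infty})=\emptyset$, i.e.\ $u\geq a_2\psi-s_{\infty}$ on $\Omega$. The \emph{main obstacle} is precisely this upgrade from the exponent $\gamma<1$ to an exponent strictly greater than $1$: the exponential gain $e^{-a_1 s}$ and the Alexander--Taylor capacity--volume comparison are what make this possible, and one must set up the capacity version of the iteration (or start the iteration from a sufficiently large $s_1$) carefully enough to land within the hypotheses of Lemma~\ref{de Giorgi}.
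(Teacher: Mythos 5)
Your plan follows the same skeleton as the paper's proof (comparison principle on sublevel sets, H\"older with exponent $b/(a_1a_2)$, a de Giorgi iteration, Alexander--Taylor to fix the exponent), but the execution as written has two genuine gaps.

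First, your comparison-principle step, using the single fixed perturbation $\rho$, only yields
\[
t^{n}\operatorname{Vol}(\Omega(s+t))\leq c\int_{\Omega(s)}(dd^{c}u)^{n},
\]
i.e.\ a \emph{volume} bound on the left. Later you want to iterate $tg(s+t)\leq Bg(s)^{\alpha}$ with $g=\operatorname{cap}$, which requires a \emph{capacity} bound on the left. To get that one must run the comparison principle against \emph{all} $v\in PSH(\Omega)$ with $-1\leq v\leq 0$ and take the supremum; this is precisely what the paper does (its inequality (4.8) reads $t^{n}\operatorname{Cap}(U_{t+s},\Omega)\leq B^{1+a_1a_2/b}\lambda(U_s)^{1-a_1a_2/b}$). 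As written, your left-hand side and your choice of $g$ don't match.

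Second, even after fixing that, your plan to ``enter the small-cap regime'' by first driving $\operatorname{Vol}(\Omega(s))$ to zero does not close: the Alexander--Taylor inequality $\operatorname{Vol}(E)\leq C\exp(-c/\operatorname{cap}(E,\Omega)^{1/n})$ gives small volume from small capacity, not the converse, so small volume does not force small capacity. The paper sidesteps this entirely by never iterating on capacity. It uses the (Alexander--Taylor/Zeriahi) bound in the form $\lambda(E)\leq C_{p}\operatorname{Cap}(E,\Omega)^{p}$, valid for \emph{every} $p>0$, to eliminate capacity between the two inequalities and obtain the purely volumetric iteration
\[
t^{n}\lambda(U_{t+s})^{1/p}\leq B^{1+a_1a_2/b}C_{p}^{1/p}\lambda(U_{s})^{1-a_1a_2/b}.
\]
Setting $g(t)=\lambda(U_t)^{1/(pn)}$ and choosing $p=2b/(b-a_1a_2)$ makes $\alpha=p(b-a_1a_2)/b=2>1$, so Lemma~\ref{de Giorgi} applies directly, with no bootstrapping and no ``small regime''. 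Relatedly, the exponential factor $e^{-a_1s}$ that you emphasize is not what saves the argument; the paper simply discards it (bounding it by $1$). What creates the room to beat the exponent is the free parameter $p$ in the volume--capacity comparison, not the $s$-decay. Replacing your $g=\operatorname{cap}$ with $g=\lambda^{1/(pn)}$ and running the comparison principle over all admissible $v$ would align your proof with the paper's and close both gaps.
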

 There is a corresponding result in the case of compact K\"ahler manifolds, which was proved in
\cite{DL14}. The same arguments can be applied for the case of domains in $\C^n$.
 For the reader's convenience, we recall the arguments here.
\begin{proof}
 Without loss of generality, we can assume that $\liminf\limits_{z\to\partial\Omega} (u-a_2\psi)=0$.
 For $t\in\R$, we denote
  $$U_t=\{z\in\Omega: u(z)< a_2\psi (z)-t\}.$$
   Let $M>\sup |u|/a_2$ and $\psi_M=\max\{\psi, -M\}$.
 Then, for any  $t,s>0$ and $v\in PSH(\Omega)$ such that $-1\leq v\leq 0$, we have
 \begin{flushleft}
 $\begin{array}{ll}
 t^n\int\limits_{U_{t+s}}(dd^cv)^n 
 =\int\limits_{\{u<a_2\psi_M -t-s\}}(tdd^cv)^n
  &\leq \int\limits_{\{u< a_2\psi_M +tv-s\}}(dd^c(a_2\psi_M+tv-s))^n\\[14pt]
 & \leq \int\limits_{\{u<a_2\psi_M+tv-s\}}(dd^cu)^n\\[14pt]
 &\leq \int \limits_{U_s}Be^{a_1(u-a_2u_0)}dV\\[14pt]
 &\leq \int \limits_{U_s}Be^{-s}e^{a_1a_2(\psi- u_0)}dV\\[14pt]
 &\leq B\int\limits_{U_s}e^{a_1a_2(\psi- u_0)}dV\\[14pt]
 &\leq B\left(\int\limits_{U_s}e^{b(\psi-u_0)}\right)^{\frac{a_1a_2}{b}}
 \left(\int\limits_{U_s}dV\right)^{1-\frac{a_1a_2}{b}}\\[14pt]
 &\leq B^{1+\frac{a_1a_2}{b}}\lambda (U_s)^{1-\frac{a_1a_2}{b}},
 \end{array}$
 \end{flushleft}
 where $\lambda$ is Lebesgue measure. Hence, we have
 \begin{equation}\label{eq estiu capvol}
 t^nCap(U_{t+s}, \Omega)\leq B^{1+\frac{a_1a_2}{b}}\lambda (U_s)^{1-\frac{a_1a_2}{b}}.
 \end{equation}
 Moreover, it follows from \cite{AT84,Ze01} that for any $p>0$, there exists $C_p>0$ depending only on $p,n$ and
 $\Omega$ such that
 \begin{equation}\label{eq estiu volcap}
 \lambda(U_t)\leq C_p Cap (U_t,\Omega)^p.
 \end{equation}
 Combining \eqref{eq estiu capvol} and \eqref{eq estiu volcap}, we obtain
 \begin{equation}
 t^n\lambda(U_{t+s})^{1/p}\leq B^{1+\frac{a_1a_2}{b}}C_p^{1/p}\lambda (U_s)^{1-\frac{a_1a_2}{b}}.
 \end{equation}
 Let $p=\frac{2b}{b-a_1a_2}$. Applying Lemma \ref{de Giorgi} for $g(t)=\lambda(U_{t})^{1/(pn)}$ and
 $\alpha=\frac{p(b-a_1a_2)}{b}$, there exists $s_{\infty}>0$  depending only on 
 $a_1,a_2,b,B,n, \Omega$ such that $\lambda (U_{s_{\infty}})=0$. By the plurisubharmonicity of 
 $u$ and $\psi$, we conclude that $u\geq a_2\psi-s_{\infty}$.
\end{proof}
\subsection{Proof of Theorem \ref{main}} We consider the case $A=0$ and then we use it to prove the result in the case $A>0$.

\begin{flushleft}
\it{Step 1: Construct an approximation.}
\end{flushleft}
Let us first contruct a sequence of solutions as in Proposition \ref{weaker condition.lem.sec weak} such that
\begin{itemize}
\item[(i)]$\varphi_k=\varphi$ on $\partial\Omega\times (\epsilon, T)$ 
for any $\epsilon>0$ and $k\gg 1$; 
\item[(ii)] 
$\sup\limits_{k\geq 0}\sup\limits_{z\in\partial\Omega}|\varphi_k (z,t)-\varphi_k (z,0)|\rightarrow 0$
as $t\searrow 0$.
\end{itemize}
 Using the convolution of $u_0+\frac{|z|^2}{k}$ with mollifiers, 
  we can take $u_{0,k}\in C^{\infty}
  (\bar{\Omega})\cap SPSH(\bar{\Omega})$ such that
  \begin{equation}\label{u0m.eq.proof exist.sec weak}
  u_{0,k}\searrow u_0.
    \end{equation}
  Note that $u_0|_{\partial\Omega}$ is continuous. Then 
  \begin{equation}\label{deltam.eq.proof exist.sec weak}
  \delta_k=\sup\limits_{z\in\partial\Omega}(u_{0,k}(z)-u_0(z))
  \stackrel{k\rightarrow\infty}{\longrightarrow} 0.
  \end{equation}
  
   We define $g_k\in C^{\infty}(\bar{\Omega})$ and
     $\varphi_k\in C^{\infty}(\bar{\Omega}\times [0,T))$ by\\
     $$g_k=\log\det (u_{0,k})_{\alpha\bar{\beta}}+f(z,0),$$
     $$\varphi_k=\zeta(\frac{t}{\epsilon_k}) (tg_k+u_{0,k})+(1-\zeta(\frac{t}
     {\epsilon_k}))\varphi,$$
     where $\zeta$ is a smooth function on $\R$ such that $\zeta$ is decreasing,
      $\zeta|_{(-\infty,1]}=1$ and $\zeta|_{[2,\infty)}=0$. $\epsilon_k>0$ are 
    chosen such that  the sequences $\{\epsilon_k\}$,
    $\{\epsilon_k\sup|g_k|\}$ are decreasing to $0$. 
    
    $u_{0,k}$ and $\varphi_k$ satisfy the compatibility condition.
    By Theorem \ref{houli}, there exists $u_k\in C^{\infty}(\bar{\Omega}\times [0,T))$ satisfying 
     \begin{equation}\label{KRF_m.sec weak}
    \begin{cases}
    \begin{array}{ll}
    \dot{u}_k=\log\det (u_k)_{\alpha\bar{\beta}}+f(z,t)\;\;\;&\mbox{on}\;\Omega\times (0,T),\\
    u_k=\varphi_k&\mbox{on}\;\partial\Omega\times [0,T),\\
    u_k=u_{0,k}&\mbox{on}\;\bar{\Omega}\times\{ 0\}.\\
    \end{array}
    \end{cases}
    \end{equation}
  It is easy to verify that $u_k$ satisfies the conditions in Proposition \ref{weaker condition.lem.sec weak}. Then
   $u(z,t)=\lim\limits_{k\to\infty} u_k(z,t)$ is the weak solution of \eqref{KRF}.
  Moreover,  $\varphi_k=\varphi$ on $\partial\Omega\times (\epsilon, T)$ for any $\epsilon>0$ and $k\gg 1$.
   
 
\begin{flushleft}
\it {Step 2: Smoothness of weak solution in 
$(\bar{\Omega}\setminus\{\nu (u_0,z)\geq \frac{2}{m}\})\times (\frac{1}{m},T)$.}
\end{flushleft}
Let $m>\frac{1}{T}$ and $\epsilon <\frac{1}{m}$.
  Applying Lemma \ref{utminusu0.lem.weak sec} and Lemma \ref{dotu.lem} for $u_k (z,t+\epsilon)$, $k\gg 1$,
   we have,
   for any $(z,t)\in\bar{\Omega}\times (\epsilon, T)$,
  \begin{flushleft}
  $\begin{array}{ll}
  (dd^cu_k)^n =e^{\dot{u}_k-f(z,t)}dV 
  &\leq C_1e^{\frac{u_k-u_k(z,\epsilon)}{t-\epsilon}}dV\\
  &\leq C_2e^{\frac{u_k-u_{0,k}}{t-\epsilon}}dV\\
  &\leq C_2e^{\frac{u_k-u_0}{t-\epsilon}}dV,
  \end{array}$
\end{flushleft} 
where $C_1, C_2>0$ are independent of $k$.

Assume that $W$ is an open neighbourhood of $\bar{\Omega}$ such that $W$ is bounded pseudoconvex and $u_0\in PSH(W)$.
Let $l=\frac{1+\epsilon}{2(1/m-\epsilon)}$ and let $\{g_{l,j}\}$ be an orthonormal basis of 
$\HH_W (l u_0)$.
Denote $v_l=\frac{1}{2l}\log\sum\limits_{j=1}^{\infty}|g_{l,j}|^2$. Applying Lemma \ref{lemestiu} for $a_1=\frac{1}{1/m-\epsilon}$,
$a_2=1$ and $b=2l$, we have
\begin{equation}
u_k(z,\frac{1}{m})\geq v_l(z)-C_3,
\end{equation}
where $C_3>0$ is independent of $k$. Then
\begin{equation}
u(z,\frac{1}{m})\geq v_l(z)-C_3.
\end{equation}
Hence, applying Proposition \ref{Prop fini sum app}, we have
\begin{equation}\label{main s2 eq3}
u(z,\frac{1}{m})\geq N\sum\limits_{\nu(u_0,a)\geq \frac{1}{l}}\log |z-a|+O(1),
\end{equation}
where $N>0$.
It follows from Proposition \ref{first results} that $u$ is smooth and satisfies \eqref{KRF} in the
classical sense in 
$(\bar{\Omega}\setminus\{z:\nu (u,z)\geq \frac{1}{l}\})\times (\frac{1}{m},T)$.
 
 When $\epsilon\searrow 0$, we conclude that  
 $u$ is smooth and satisfies \eqref{KRF} in the
classical sense in 
$(\bar{\Omega}\setminus\{z:\nu (u,z)\geq \frac{2}{m}\})\times (\frac{1}{m},T)$ and
$u|_{\partial\Omega\times [0,T)}=\varphi|_{\partial\Omega\times [0,T)}$.

\begin{flushleft}
\it{Step 3: The set of singular points of $u$}.
\end{flushleft}
Assume that $\{z:\nu (u_0,z)\geq \frac{2}{m}\}=\{a_{m,1},...,a_{m,N(m)}\}$. For any $j=1,...,N(m)$, we denote
\begin{equation}
\epsilon_{m,j}=\sup\{T>t\geq 0: \nu(u(.,t'),a_{m,j})>0, \forall 0\leq t\leq t'\}.
\end{equation}
We need to show that
\begin{itemize}
	\item $u$ is smooth and satisfies \eqref{KRF} in the classical sense in $Q_m:=\bar{\Omega}\times (\frac{1}{m},T)\setminus
	\cup_{j\leq N(m)}\{a_{m,j}\}\times (\frac{1}{m},\epsilon_{m,j}].$
	\item $\frac{\nu (u_0,a_{m,j})}{2n}\leq \epsilon_{m,j}\leq \frac{\nu (u_0,a_{m,j})}{2}.$
	\item $\nu (u(.,t),a_{m,j})>0$ for any $j=1,...,N(m)$ and $t<\epsilon_{m,j}.$
	\item If $\epsilon_{m,j}<T$ then $\nu (u(.,t),a_{m,j})\searrow 0$ as $t\nearrow \epsilon_{m,j}$.	\\
\end{itemize}
By Step 2, for any $T>\epsilon>0$, there is $r>0$ such that $u$ is smooth in $(B(a_{m,j},r)\setminus\{a_{m,j}\})\times (\epsilon,T)$
for any $j=1,..., N(m)$.
If there is $\epsilon\in (0,T)$ satisfying $\nu(u(.,\epsilon),a_{m,j})=0$ then
it follows from Proposition \ref{first results}
that $u\in C^{\infty}(B(a_{m,j},r)\times (\epsilon,T))$. By the definition of $\epsilon_{m,j}$, we conclude that $u$ is smooth in 
$Q_m$. Clearly, $u$ satisfies \eqref{KRF} in the classical sense in $Q_m$. \\

 By Propostion \ref{singular.prop.weak sec} and by the smoothness of $u$ in $Q_m$, we have
 $\epsilon_{m,j}\geq \frac{\nu (u_0,a_{m,j})}{2n}$.
If $\nu (u_0,a_{m,j})<2T$, we have, as in the step 2,
\begin{equation}\label{main s3 eq1}
u(a_{m,j},c)\geq v_{\frac{1+\epsilon}{2(c-\epsilon)}}+O(1) =: v_l+O(1),
\end{equation}
for any $0<c<T$ and $c>\epsilon>0$.
 
If $2c>\nu (u_0, a_{m,j})$ then for any $\epsilon\ll 1$ we have
 $$\nu (u_0, a_{m,j}) <\dfrac{1}{l}.$$ 
 It follows from Proposition 
\ref{Prop fini sum app} that $v_l$ is smooth in a neighbourhood of $a_{m,j}$.  By \eqref{main s3 eq1}, we have
$$c\geq\epsilon_{m,j}.$$ Letting $c\rightarrow\frac{\nu (u_0, a_{m,j})}{2}$, 
	we obtain $\epsilon_{m,j}\leq \frac{\nu (u_0, a_{m,j})}{2}$.  Thus $\epsilon_{m,j}$ satisfies
	\begin{center}
		$\frac{\nu (u_0,a_{m,j})}{2n}\leq \epsilon_{m,j}\leq \frac{\nu (u_0,a_{m,j})}{2}.$
	\end{center}

By definition of $\epsilon_{m,j}$, we have  $\nu (u(.,t),a_{m,j})>0$ for any $j=1,...,N(m)$ and $t<\epsilon_{m,j}$. Applying Lemma
\ref{utminusu0.lem.weak sec} for $u_k$ and letting $k\rightarrow\infty$, we conclude that $\nu (u(.,t),a_{m,j})$ is 
non increasing in $t$. If $\epsilon_{m,j}<T$ then
by Proposition \ref{singular.prop.weak sec} and by the smoothness of $u$ in $Q_m$, we have, for any 
$0<\epsilon<\epsilon_{m,j}$,
\begin{center}
	$\dfrac{\nu (u(.,\epsilon),a_{m,j})}{3n}+\epsilon\leq\epsilon_{m,j}.$
\end{center}
   Hence, $\nu (u(.,t),a_{m,j})\searrow 0$ as $t\nearrow \epsilon_{m,j}$.
\begin{flushleft}
 \it{Step 4: Continuity at zero.}
\end{flushleft}
 Applying Lemma \ref{utminusu0.lem.weak sec}, we have
 \begin{equation}\label{liminf.conti0.proofmain2}
 \liminf\limits_{t\to 0}u(z,t)\geq u_0(z),
 \end{equation}
 for any $z\in\bar{\Omega}$.
 
  Note that $u$ is the limit of a decreasing sequence of smooth functions, then $u\in USC(\bar{\Omega}\times [0,T))$. We have
 \begin{equation}\label{limsup.conti0.proofmain2}
 \limsup\limits_{t\to 0}u(z,t)\leq u_0(z),
 \end{equation}
 for any $z\in\bar{\Omega}$.
 
 Combining \eqref{liminf.conti0.proofmain2} and \eqref{limsup.conti0.proofmain2}, we obtain
 $$\lim\limits_{t\to 0}u(z,t)=u_0(z).$$
 
 Hence, by the dominated convergence theorem, $u(., t)\rightarrow u_0$ in $L^1$, as $t\to 0$.
 
 \begin{flushleft}
 \it{Step 5: The case $A>0$.}
 \end{flushleft}
 Assume that $u$ is the weak solution of \eqref{KRF} with $A>0$. 
 We set $$v(z,t)=(At+1)u(z,\frac{\log (At+1)}{A}).$$ Then we can verify that $v$ is the weak solution of 
 \begin{equation}
 \begin{cases}
 \begin{array}{ll}
 \dot{v}=\log\det (v_{\alpha\bar{\beta}})-n\log (At+1)+f(z,\frac{\log (At+1)}{A})\;\;
 &\mbox{on}\; \Omega\times (0,\frac{e^{AT}-1}{A}),\\
 v(z,0)=u_0(z)&\mbox{on}\;\Omega,\\
 v(z,t)=\varphi (z,\frac{\log (At+1)}{A})&\mbox{on}\; \partial\Omega\times [0,T).
 \end{array}
 \end{cases}
 \end{equation}
 Using the case $A=0$, we conclude the similar result for the case $A>0$.

\end{document}